\newtheorem{theorem}{Theorem}[section]
\newtheorem{lemma}[theorem]{Lemma}
\newtheorem{corollary}[theorem]{Corollary}
\newtheorem{construction}[theorem]{Construction}
\newtheorem{definition}[theorem]{Definition}
\newtheorem{remark}[theorem]{Remark}
\newtheorem{example}[theorem]{Example}
\begin{document}

\title{Cosets, voltages, and derived embeddings}

\author{Steven Schluchter\\ Department of Mathematical Sciences\\ George Mason University\\ Fairfax, VA 22030\\U.S.A.\\ \texttt{steven.schluchter@gmail.com}}

\date{}

\maketitle

\begin{abstract}An ordinary voltage graph embedding of a graph in a surface encodes a certain kind of highly symmetric covering space of that surface.  Given an ordinary voltage graph embedding of a graph $G$ in a surface with voltage group $A$ and a connected subgraph $H$ of $G$, we define special subgroups of $A$ that depend on $H$ and show how cosets of these groups in $A$ can be used to find topological information concerning the derived embedding without constructing the whole covering space.  Our strongest theorems treat the case that $H$ is a cycle and the fiber over $H$ is a disjoint union of cycles with annular neighborhoods, in which case we are able to deduce specific symmetry properties of the derived embeddings.  We give infinite families of examples that demonstrate the usefulness of our results.\end{abstract}

\section{Introduction}\label{section:introduction}

A cellular embedding of a graph in a surface encodes a cellular decomposition of the surface.  While a cellular graph embedding can be encoded in the form of a combinatorial object, say a rotation scheme or a walk double cover, the combinatorial objects themselves sometimes obscure the embeddings they encode: a rotation scheme encodes a graph embedding in the form of cyclic orderings of edges incident to a vertex, and a cycle or walk double cover encodes a graph embedding in the form of boundary walks of the faces of the embedding.  If $K$ is a connected 2-complex induced by a cellular graph embedding, then the combinatorial structure of the embedded graph is not necessarily clear if all one understands about $K$ is a set of lists of edges that form a walk double cover.  Similar difficulties arise from considering graph embeddings encoded in the form of rotation schemes.  Moreover, in the case of a walk double cover it is not immediately clear that $K$ is homeomorphic to a surface; there could be point singularities in $K$, which would mean that $K$ is homeomorphic to a pseudosurface (the result of a 2-manifold after a finite number of point identifications).

Another way to encode a cellular graph embedding is with an assignment of algebraic data to another cellular graph embedding.  This is the basic idea behind current graphs \cite[\S 4.4]{GT} and the various forms of voltage graph embeddings.  In each of these cases, the embedding that is encoded, called the \textit{derived embedding}, is a covering space of the surface (sometimes branched on the faces) containing the encoding, called the \textit{base embedding}.  Current graphs lend themselves easily to the encoding of triangular embeddings of complete graphs \cite[Example 4.4.1]{GT}.  Ordinary voltage graphs are equally as powerful a tool because they are related by duality to current graphs.  Ordinary voltage graphs and some related formulations have been used to construct specific kinds of embeddings of graphs in surfaces and pseudosurfaces.  Archdeadon in \cite{A} uses his medial-graph enhancement of ordinary voltage graph embeddings to construct orientable and nonorientable embeddings of specific complete bipartite graphs with specific bipartite topological dual graphs.  Archdeacon, Conder, and \v{S}ir\'{a}\v{n} in \cite{ACS} use specially constructed ordinary voltage graphs to construct infinitely many graph embeddings featuring multiple kinds of symmetries.  Ellingham and Schroeder in \cite{Ellingham} use ordinary voltage graph embeddings to construct an embedding of the complete tripartite graph $K_{n,n,n}$ in an orientable surface such that the boundary of each face is a Hamilton cycle.  More recently, in \cite{AS1}, ordinary voltage graph embeddings were used in the cataloging of all cellular automorphisms of all surfaces of Euler characteristic at least $-1$.

In these and other applications of (ordinary) voltage graph embeddings the desired embeddings are not well understood at a global level; it's difficult to conceive a totally transparent representation of the surface containing the derived embedding.  This is in part due to the fact that the encoded embedding has to be understood through two layers of encryption: a rotation scheme (or walk double cover) and the \textit{voltage assignment} of elements of a finite group, called the \textit{voltage group}, to the edges of the embedding.  The purpose of this article is to demonstrate that given an ordinary voltage graph embedding and a connected subgraph $H$ of $G$, there is basic topological information about the derived embedding that is contained in the cosets of specially constructed subgroups of $A$ that depend on $H$.  The strongest theorems treat the case in which $H$ is a connected 2-regular subgraph, which is called a \textit{cycle}.  

In Section \ref{section:definitions}, we review all necessary graph theory and topological graph theory including Archdeacon's extension of a voltage assignment to the subdivided medial graph of the base embedding.  In Section \ref{section:cosets}, we develop our theory for connected subgraphs $H$ of $G$ as an outgrowth of a theorem of Gross and Alpert.  Focusing specially on cycle subgraphs $C$ of $G$, we show that if each lift of $C$ has an annular neighborhood, then an understanding of the nature of the containments and intersections of cosets of the groups constructed in Section \ref{section:cosets} can lead to a basic understanding of the derived surface as a union of surfaces with boundary, whose boundary components are the lifts of $C$.  In Section \ref{section:examples}, we produce new examples of infinite families of derived embeddings that have specific properties.  The examples we produce are designed to highlight the kinds of insights obtained by using the results in Section \ref{section:cosets}.

\section{Definitions and Basic Information}\label{section:definitions}

\subsection{Graphs, graph embeddings, and 2-complexes}

For our purposes, a graph $G=(V,E)$ is a finite and connected multigraph.  An edge is a \emph{link} if it is not a loop.  A \emph{cycle} is a connected 2-regular subgraph of $G$.  A \emph{path} in $G$ is a sequence of vertices and edges $v_1e_1\ldots e_{n-1}v_n$ such that the vertices are all distinct and the edge $e_i$ connects vertices $v_i$ and $v_{i+1}$.  Given subgraphs $H_1$, $H_2$ of $G$, an \emph{$H_1$-$H_2$-path} is a path that has one of its end vertices in $H_1$, the other end vertex in $H_2$, and no other vertices in $H_1\cup H_2$; if $H_1=H_2$, then such a path is an $H_1$-path.  We let $D(G)$ denote the set of all darts (directed edges) in $G$.  Each dart $d\in D(G)$ has a \emph{head vertex} $h(d)$ and a \emph{tail vertex} $t(d)$.  We say that two darts on the same edge are \textit{opposites} of each other, and we adopt the convention that one dart is called the \textit{positive edge} and the other is called the \textit{negative edge}; $d^{-1}$ is the opposite of $d$.  A \textit{walk} $W$ is a sequence of darts $d_1d_2\ldots d_m$ such that $h(d_i)=t(d_{i+1})$.  If $h(d_m)=t(d_1)$, then we say that $W$ is a \textit{closed walk}.  We define an $H_1$-$H_2$-walk and an $H_1$-walk by analogy with an $H_1$-$H_2$-path and an $H_1$-path, respectively.  For an edge $e$ joining vertices $u$ and $v$, define the operation of \textit{subdividing the edge $e$} to be the operation of replacing the (directed) edge $e$ with a path $ue_1we_2v$ consisting of two edges.  We also define the \textit{subdivision of a graph} to be the operation of subdividing each edge of the graph.  Two graphs $G_1$ and $G_2$ are isomorphic if there is map $\phi\colon G_1\rightarrow G_2$ that bijectively maps vertices to vertices and edges to edges such that the incidence of edges at vertices is preserved. 

For the duration of this article, a \emph{surface} is a compact 2-manifold, $S$ shall denote a connected surface without boundary, and $\hat{S}$ shall denote a connected surface with boundary.  A \textit{cellular embedding} of $G$ in $S$ is an embedding that subdivides $S$ into 2-cells.  A \textit{proper embedding} of $G$ in $\hat{S}$ is an embedding that subdivides $\hat{S}$ into 2-cells; we require that the boundary components of $\hat{S}$ are contained in the image of $G$.  For the duration of this article, $G\rightarrow S$ and $G\rightarrow \hat{S}$ shall denote a cellular embedding and a proper embedding, respectively.  Two cellular or proper embeddings in the same surface are \emph{isomorphic} if there is a homeomorphism from the surface to itself that maps one embedding to the other.  Given $G\rightarrow S$, we say that that a cycle subgraph $C$ of $G$ is \textit{separating} if $S\setminus C$ is not connected and \textit{nonseparating} otherwise.  We will let $\left \{C_i(S),\partial\right \rbrace$ denote the $\mathbb{Z}_2$-chain complex induced by $G\rightarrow S$: $C_0(S),C_1(S),$ and $C_2(S)$ are the formal sums of vertices, edges, and faces of the 2-complex created by $G\rightarrow S$.  We let $Z(G)$ denote the subspace of $C_1(G)$ generated by the 1-chains inducing cycles in $G$, which is called the \textit{cycle space of $G$}.  We let $\partial\colon C_i(S)\rightarrow C_{i-1}(S)$ denote the usual boundary operator.  We use similar notation for the $2$-complex created by $G\rightarrow \hat{S}$.  For $X\in C_2(S)$, we let $S[X]$ denote the subcomplex of $\left \{C_i(S),\partial\right \rbrace$ consisting of the faces and all subfaces of $X$, and we let $G[X]$ denote the subgraph of $G$ consisting of all subfaces of $X$.  We can also treat $G$ as a 1-complex and adopt similar notation: for $X\in C_1(G)$, $X\in C_1(S)$ or $X\subset E(G)$, let $G[X]$ denote the subgraph of $G$ induced by $X$.  For $D_1\subset D(G)$, let $G[D_1]$ be the graph consisting of the edges whose darts appear in $D_1$.

For a subset $S_1$ of another set $U$, we let $S_1^c$ denote the complement of $S_1$ in $U$.  We will bend the notation somewhat, and let $S_1^c$ denote the $\mathbb{Z}_2$-sum of the elements of a $\mathbb{Z}_2$ vector space $U$ not appearing in $S_1$.  The proof of Lemma \ref{lemma:ComplexComplement} is straightforward, and therefore omitted.

\begin{lemma}\label{lemma:ComplexComplement} Given $G\rightarrow S$ and $X\in C_2(S)$, $\partial X= \partial X^c$.\end{lemma}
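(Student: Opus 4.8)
The plan is to derive the lemma from the single observation that the formal sum of all faces of $G\rightarrow S$ is a cycle in the $\mathbb{Z}_2$-chain complex $\{C_i(S),\partial\}$. Write $\mathbf{1}_S\in C_2(S)$ for the formal sum of all faces of the $2$-complex induced by $G\rightarrow S$. Since $X^c$ is the complement of $X$ inside the set of all faces and we are working mod $2$, we have $X+X^c=\mathbf{1}_S$ in $C_2(S)$, equivalently $X^c=\mathbf{1}_S+X$. So it suffices to show that $\partial\mathbf{1}_S=0$, after which $\mathbb{Z}_2$-linearity of $\partial$ gives $\partial X^c=\partial(\mathbf{1}_S+X)=\partial\mathbf{1}_S+\partial X=\partial X$, which is the claim.

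To see that $\partial\mathbf{1}_S=0$, I would compute the coefficient of an arbitrary edge $e$ of $G$ in $\partial\mathbf{1}_S$. That coefficient equals, mod $2$, the total number of times $e$ is traversed among the boundary walks of all faces of $G\rightarrow S$. Because $G\rightarrow S$ is a cellular embedding of a surface \emph{without boundary}, each edge is incident to exactly two face-sides: $e$ is traversed twice in total, whether that be once by each of two distinct faces or twice by a single face (the latter contributing $0$ to $\partial$ of that face in any case). Either way the coefficient of $e$ in $\partial\mathbf{1}_S$ is $2\equiv 0$, and since $e$ was arbitrary, $\partial\mathbf{1}_S=0$.

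There is no real obstacle here, which is consistent with the remark that the proof is straightforward; the one point deserving care is the claim that every edge is traversed exactly twice in total, and this is precisely where the hypothesis that $S$ has empty boundary is used. (For a proper embedding $G\rightarrow\hat S$, an edge lying on a boundary component of $\hat S$ is traversed only once, so $\partial\mathbf{1}_{\hat S}$ is the sum of the boundary edges rather than $0$, and the analogous equality fails; this is why the statement is given only for $S$.)
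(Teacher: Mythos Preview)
Your proof is correct. The paper omits the proof entirely, so there is no approach to compare against; your argument via $\partial\mathbf{1}_S=0$ is the natural one, and your remark about why the hypothesis that $S$ has no boundary is needed is apt.
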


For a graph $G$ and a fixed $v$, $\mbox{star}(v)$ shall denote the set of all edges incident to $v$, links and loops; for $G\rightarrow S$, $U^*(v)$ denote an open set (in the usual Euclidean topology) in $S$ that contains $v$, intersects ends of edges of $\mbox{star}(v)$, and intersects no other edges or vertices of $G$.  We call $U^*(v)$ a \textit{vertex-star neighborhood of $v$}.  If one thickens $G$ such that the vertices become discs and the edges become rectangular strips glued to the discs, one produces what is called a \textit{band decomposition of $S$}: the \textit{$0$-bands} are the discs, the \textit{$1$-bands} are the rectangular strips, and the \textit{$2$-bands} are the discs glued to the $1$-bands and the $0$-bands.  Let $\rho: D(G) \rightarrow D(G)$ denote the permutation that takes $d$ to the next dart in the cyclic order of darts with tail vertex $t(d)$.  The order that follows this rotation is called the \textit{rotation on $t(d)$} and is denoted $t(d): d_1d_2\ldots$.  The permutation $\rho$ is called a \textit{rotation scheme on $G$}.  If one of the two possible orientations on any given 1-band joining $0$-bands is consistent with both of the orientations induced by $\rho$ on the joined $0$-bands (see \cite[Figures 3.13, 3.14]{GT} for enlightening diagrams), then we say that the edge corresponding to the 1-band is an \textit{orientation-preserving edge} and an \textit{orientation-reversing edge} otherwise.  We will call an orientation-preserving edge and an orientation-reversing edge a \emph{type-0 edge} and \emph{type-1 edge}, respectively.  Given $G\rightarrow S$, an \textit{orientation-reversing walk} is a walk consisting of an odd number of darts on type-1 edges, and an \textit{orientation-preserving walk} otherwise.  Similarly, a cycle is an \textit{orientation-reversing cycle} if it contains an odd number of orientation-reversing edges and an \textit{orientation-preserving cycle} otherwise.  Following the discussion in \cite[p.111]{GT} we may reverse the orientation on a $0$-band corresponding to a vertex $v$ without changing the embedding that corresponds to the rotation scheme: the rotation on $v$ is reversed (the rotation on $v$ is then given by $\rho^{-1}$) and the orientation type of each link incident to $v$ is switched.  Archdeacon in \cite{A} calls this process a \textit{local sign switch}.  Two graph embeddings are considered to be equivalent if their corresponding band decompositions differ by a sequence of local sign switches.

For a fixed vertex $v$ of $G$, consider $U^*(v)$, and let the \textit{corners} of $G\rightarrow S$ at $v$ refer to the components of $U^*(v)\setminus G[ \mbox{star}(v)]$.  The \textit{medial graph} is the graph whose vertices are the edges of $G$ and has an edge joining two vertices that correspond to edges of $G$ bounding a corner of $G\rightarrow S$.  The medial graph is clearly $4$-regular.  Each face of the medial graph embedding falls into one of two categories: those which correspond to the vertices of $G$ and those which correspond to the faces of $G\rightarrow S$.  An example of a medial graph of a cellular graph embedding appears in Figure \ref{fig:MedialExample}.

\begin{figure}[H]
\begin{center}
\includegraphics[scale=.5]{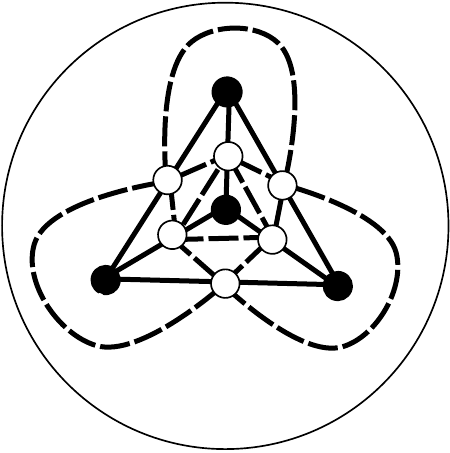}
\end{center}
\caption{A cellularly embedded graph $G$ in the sphere and the corresponding medial graph.  The medial graph has white vertices and dashed edges.}\label{fig:MedialExample}
\end{figure}

Throughout this article, for $G\rightarrow S$, $z\in C_1(S)$ shall denote a 1-chain of $S$ inducing a cycle in $G$.  We let $R(z)$ denote an open set in $S$ that contains only edges and vertices of $G[z]$, and intersecting only those edge ends that are incident to vertices of $G[z]$.  We will call $R(z)$ a \textit{ribbon neighborhood of $z$}; $R(z)$ is homeomorphic to an annulus or M\"obius band if $G[z]$ is an orientation-preserving cycle or orientation-reversing cycle, respectively

We will call a set of 1-chains inducing orientation-preserving cycles that have no vertices in common a \textit{set of 1-chains having property $\Delta$}.  Let $X=\left \{z_1,\ldots,z_m\right \rbrace$ denote a set of 1-chains having property $\Delta$.  We will call the connected sub 2-complexes of the 2-complex induced by $G\rightarrow S$ that are bounded by the cycles induced by the 1-chains of $X$ the \textit{z-regions of $S$ with respect to $X$}.  Define the \textit{z-graph of $S$ with respect to $X$}, which we denote $\Gamma(z_1,\ldots,z_m)$, to be the graph whose vertices are the z-regions of $S$ with respect to $X$ and whose edges are the 1-chains of $X$: an edge $z_j$ is incident to a vertex $v_\Gamma$ if $G[z_j]$ is contained in the boundary of $v_\Gamma$, and if both components of $R(z_j)\setminus G[z_j]$ are contained in $v_\Gamma$, then $z_j$ is a loop at $v_\Gamma$. 

\begin{remark}\label{remark:ZGraph} The purpose of introducing the z-graph is to introduce a combinatorial manner of capturing the incidence of z-regions at the cycles that bound them.  Since we are considering graphs embedded in surfaces, a cycle can bound up to two z-regions, and so it is fitting to describe the incidence of z-regions with a graph.  Given $G\rightarrow S$ and a set of 1-chains having property $\Delta$, it is easy to see that the corresponding z-graph is connected; for any two faces $f_1$, $f_2$ of $G\rightarrow S$, there is a sequence of faces $f_1f_{a_1}f_{a_2}\ldots f_2$ such that any two consecutive faces share at least one boundary edge.\end{remark}

\subsection{Ordinary voltage graphs and ordinary voltage graph embeddings}

Following \cite{GT}, we let $e$ denote the \textit{positive edge} on an edge $e\in E(G)$ and $e^{-}$ denote the \textit{negative edge} on $e$.  Let $A$ denote a finite group and let $1_A$ denote the identity element of $A$.  An \textit{ordinary voltage graph} is an ordered pair $\langle G, \alpha \rightarrow A \rangle$ such that $\alpha\colon D(G)\rightarrow A$ satisfies $\alpha(e^-)=\alpha(e)^{-1}$.  The group element $\alpha(e)$ is called the \textit{voltage} of $e$.  Associated to each ordinary voltage graph is a \textit{derived graph} $G^\alpha=(V\times A, E\times A)$.  The directed edge $(e,a)$ has tail vertex $(v,a)$ and head vertex $(v,a\alpha(e))$; it is a consequence of this and the conditions imposed on $\alpha$ that the dart $(e^{-},a\alpha(e))$ is the dart opposite $(e,a)$.  We will use the abbreviation $v^a$ for $(v,a)$ and $e^a$ for $(e,a)$.  We let $p\colon G^\alpha \rightarrow S$ denote the projection (covering) map satisfying $p(e^a)=e$ and $p(v^a)=v$.  For a walk $W=d_1d_2\ldots d_m$, let $\omega(W)=\alpha(d_1)\alpha(d_2)\ldots\alpha(d_m)$ denote the \textit{net voltage of $W$}.  If $W$ begins at a vertex $v$, we let $W_v^a$ denote the lift of $W$ beginning at $v^a$.  For a closed walk $W$, $W_v^a$ ends at the vertex at which $W_v^{a\omega(W)}$ begins, and we say that the lifts $W_v^a$ and $W_v^b$ are \textit{consecutive} if $a\omega(W)=b$ or $b\omega(W)=a$.  We call a set of lifts of the form \[ \left \{W_v^a,W_v^{a\omega(W)},\ W^{a\omega(W)^2},\ W^{a\omega(W)^3},\ \ldots,W^{a\omega(W)^{|\langle \omega(W)\rangle|-1}} \right \rbrace\] a \textit{a set of consecutive lifts of $W$}, and we let $\hat{W}_v^a$ denote the set of consecutive lifts of $W$ containing $W_v^a$.

Also described in \cite{GT}, an \textit{ordinary voltage graph embedding} of $G$ in $S$ is an ordered pair $\langle G \rightarrow S, \alpha \rightarrow A\rangle$, which is called a \textit{base embedding}.  Each base embedding encodes a \textit{derived embedding}, denoted $G^\alpha \rightarrow S^\alpha$, in the \textit{derived surface} $S^\alpha$.  We advise the reader that even though $S$ is assumed to be connected, $S^\alpha$ may not be connected.  Gross and Tucker in \cite{GT} describe the derived embedding according to rotation schemes, but we use Garman's manner of describing it; Garman points out in \cite{G} that since it is the lifts of facial boundaries that form facial boundaries in $S^\alpha$, $S^\alpha$ can be formed by ``identifying each component of a lifted region with sides of a 2-cell (unique to that component) and then performing the standard identification of edges from surface topology".  It is therefore permissible to have a base embedding in a surface with or without boundary; for each (directed) edge $e$ bounded on only one side by a face of $G\rightarrow \hat{S}$, each (directed) edge $e^a$ is bounded on only one side by a face of $G^\alpha \rightarrow \hat{S}^\alpha$.

For $\langle G\rightarrow S,\alpha \rightarrow A\rangle$ we let $S_v^a$ denote the component of $S^\alpha$ containing the vertex $v^a$, and for $\langle G,\alpha \rightarrow A\rangle$ we let $G_v^a$ denote the component of $G^\alpha$ containing $v^a$. We use similar notation for induced ordinary voltage graphs and ordinary voltage graph embeddings, e.g., for $I\in C_2(S)$ such that $S[I]$ is connected and $v\in V(S[I])$, $S[I]_v^a$ is the component of $S[I]^\alpha$ containing $v^a$.

The voltage group $A$ acts by left multiplication on $G^\alpha$.  For $c\in A$, let $c\cdot v^a=v^{ca}$, $c\cdot e^a=e^{ca}$.  This group action is  clearly regular (free and transitive) on the fibers over vertices and (directed) edges of $G^\alpha$, and so the components of $G^\alpha$ are isomorphic.  This action extends to a transitive (not necessarily free) action on the faces forming the fiber over a face of a base embedding, and so the components of $S^\alpha$ are homeomorphic as topological spaces and isomorphic as cellular complexes.  Per \cite[Theorem 4.3.5]{GT}, the graph covering map can be extended to a (branched) covering map of surfaces; it is not difficult to see that this covering map also extends to surfaces with boundary after considering Garman's construction of the derived embedding.  Moreover, for a walk $W$ based at a vertex $v$, if we let $\omega$ stand in place of $\omega(W)$ and $|\omega|$ stand in the place of $|\langle \omega(W)\rangle |$, we see here that the $A$-action extends to lifts of $W$ and sets of consecutive lifts of $W$: $c\cdot W_v^a=W_v^{ca}$ and \[ c\cdot \hat{W}_v^a = \left \{W_v^a,\ W_v^{ca\omega},\ W_v^{ca\omega^2},\ W_v^{ca\omega^3}, \ldots ,\ W^{ca\omega^{|\omega|-1}}\right \rbrace.\]

\subsubsection{Archdeacon's Medial Graph Enhancement}\label{subsubsection:ArchDeacon}		
Archdeacon in \cite{A} developed an extension of an ordinary voltage graph embedding to its subdivided medial graph and showed that the derived embedding of the subdivided medial graph is the subdivided medial graph of the derived embedding.  We state only the necessary definitions and theorems here.  For a vertex $v$ of $G$ to which no loops are incident and $c\in A$, define a \textit{local voltage modification at $v$} to be the result of replacing $\alpha(d)$ with $c\alpha(d)$ for all darts $d$ on edges in $\mbox{star}(v)$ with tail vertex $v$ (and $\alpha(d^{-1})$ with $\alpha(d^{-1})c^{-1}$).  For a (directed) edge $e$ such that $t(d)=u$, $h(d)=v$, and voltage $\alpha(e)$, define the operation of \textit{subdividing the voltage assignment to $e$} to be the operation of replacing the (directed) edge $e$ with a path $ue_1we_2v$ of length two such that $t(e_1)=u$, $h(e_1)=t(e_2)=w$, $h(e_2)=v$, and assigning the voltages $\alpha(e_1)=\alpha(e)$, $\alpha(e_1^{-})=\alpha(e)^{-1}$, $\alpha(e_2)=\alpha(e_2^{-})=1_A$; if we'd like to subdivide the voltage assignment to $e^-$ instead, we could perform a local voltage modification at $w$.  

\begin{lemma}\label{lemma:ArchdeaconModification}\cite[Lemma 3.2]{A} If two ordinary voltage assignments to the same graph or the same graph embedding differ by a local voltage modification, then they encode isomorphic derived graphs or derived embeddings, respectively.\end{lemma}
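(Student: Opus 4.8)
The plan is to realize the local voltage modification as a relabelling of the fiber over $v$ and to write down the induced isomorphism explicitly. Let $\alpha'$ be obtained from $\alpha$ by a local voltage modification at $v$ using $c\in A$, and define $\sigma\colon V(G)\rightarrow A$ by $\sigma(v)=c^{-1}$ and $\sigma(u)=1_A$ for every $u\neq v$. Because no loops are incident to $v$, each dart has at most one end at $v$, and one reads off directly from the definition of the modification the identity $\alpha'(d)=\sigma(t(d))^{-1}\,\alpha(d)\,\sigma(h(d))$ for every $d\in D(G)$: this holds trivially for darts missing $v$, it reads $c\,\alpha(d)$ when $t(d)=v$, and it reads $\alpha(d)\,c^{-1}$ when $h(d)=v$. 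I would then define $\phi\colon G^\alpha\rightarrow G^{\alpha'}$ on vertices by $\phi(u^a)=u^{a\sigma(u)}$ --- so $\phi$ fixes every vertex outside the fiber over $v$ and sends $v^a$ to $v^{ac^{-1}}$ --- and on darts by $\phi(d^a)=d^{\,a\sigma(t(d))}$.

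The first thing to check is that $\phi$ is an isomorphism of derived graphs, which handles the ``derived graph'' half of the statement. In $G^\alpha$ the dart $d^a$ runs from $t(d)^a$ to $h(d)^{a\alpha(d)}$; using the displayed identity for $\alpha'$, a one-line computation shows that in $G^{\alpha'}$ the dart $d^{\,a\sigma(t(d))}$ runs from $\phi(t(d)^a)$ to $\phi(h(d)^{a\alpha(d)})$, and that $\phi$ sends opposite darts to opposite darts. Hence $\phi$ is a graph homomorphism commuting with the projections, and since it acts on each fiber by right multiplication by a fixed element of $A$ it is a bijection, hence an isomorphism. (The same formulas show $\phi$ is in fact equivariant for the left $A$-action, since $\sigma$ multiplies fiber coordinates on the right.)

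It remains to see that, when $\alpha$ and $\alpha'$ are voltage assignments to an embedding $G\rightarrow S$, the map $\phi$ carries $G^\alpha\rightarrow S^\alpha$ to $G^{\alpha'}\rightarrow S^{\alpha'}$. The key point is that the data encoding a derived embedding is lifted from the base and is insensitive to the voltages beyond the labelling of fibers: in Garman's description (recalled in Section \ref{section:definitions}) the faces of $S^\alpha$ are exactly the $2$-cells attached along the lifts of the facial boundary walks of $G\rightarrow S$. Since $\phi$ preserves each vertex fiber, sends every dart $d^a$ to a dart over the same edge, and commutes with the projection $p$, it carries each lifted facial walk of $G^\alpha$ onto a lifted facial walk of $G^{\alpha'}$ lying over the same facial walk of $G\rightarrow S$; equivalently, $\phi$ takes the lifted rotation at $u^a$ onto the lifted rotation at $\phi(u^a)$ and preserves orientation types of edges. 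So $\phi$ extends, $2$-cell by $2$-cell, to a homeomorphism $S^\alpha\rightarrow S^{\alpha'}$ taking $G^\alpha$ to $G^{\alpha'}$, and in particular the two derived embeddings are equivalent in the sense of Section \ref{section:definitions}. I do not expect a genuinely hard step here: the only things needing care are the endpoint and opposite-dart bookkeeping in the middle paragraph and phrasing the final identification inside the band-decomposition framework adopted here rather than only for rotation systems. The no-loop hypothesis on $v$ is used exactly once and is essential --- it is precisely what makes ``replace $\alpha(d)$ by $c\,\alpha(d)$ for every dart with tail $v$'' compatible with $\alpha'(d^{-1})=\alpha'(d)^{-1}$, equivalently what makes the modification coincide with the switch by $\sigma$ above (a loop at $v$ would have its voltage conjugated rather than left-translated).
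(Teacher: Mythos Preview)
The paper does not supply a proof of this lemma: it merely quotes it from \cite{A} and moves on. Your proposal is therefore not competing against any argument in the paper, and the right question is simply whether your proof is correct. It is. Writing the local voltage modification as the coboundary adjustment $\alpha'(d)=\sigma(t(d))^{-1}\alpha(d)\,\sigma(h(d))$ for the function $\sigma$ supported at $v$, and then defining the fiber-by-fiber relabelling $\phi(u^a)=u^{a\sigma(u)}$, is the standard and cleanest way to prove this fact; your endpoint and opposite-dart checks go through exactly as you say. For the embedding half, your appeal to Garman's description of $S^\alpha$ (faces are $2$-cells attached along lifted facial boundary walks) is the right move in this paper's framework: since $\phi$ commutes with the projection $p$ and acts on each edge-fiber by a bijection, it carries lifts of a facial walk to lifts of the same facial walk, and hence extends cell-by-cell to a homeomorphism $S^\alpha\to S^{\alpha'}$; the observation that orientation types of edges are inherited from the base and therefore preserved by $\phi$ handles the nonorientable case as well. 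Your closing remark about the no-loop hypothesis is accurate and worth keeping.
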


Given $\langle G,\alpha \rightarrow A\rangle$, the \textit{subdivided ordinary voltage graph} $\langle G',\alpha' \rightarrow A\rangle$ is formed by subdividing the voltage assignment to every directed edge of $G$. 

\begin{lemma}\label{lemma:ArchdeaconSubdivision}\cite[Lemma 3.3]{A} If $\langle G', \alpha' \rightarrow A\rangle$ is a subdivided ordinary voltage graph obtained from\\ $\langle G,\alpha \rightarrow A\rangle$, then $(G')^{\alpha'}$ is a subdivision of $G^\alpha$.\end{lemma}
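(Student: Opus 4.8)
The plan is to write down explicit vertex and edge sets for both derived graphs and to exhibit an isomorphism between $(G')^{\alpha'}$ and the graph obtained from $G^\alpha$ by subdividing each edge once; equivalently, I will show that $G^\alpha$ is recovered from $(G')^{\alpha'}$ by suppressing the fibre over the subdivision vertices. First I would fix notation: for each edge $e$ of $G$ with $t(e)=u$ and $h(e)=v$, write $w_e$ for the new degree-two vertex and $e_1,e_2$ for the two new edges, so that in $\langle G',\alpha'\to A\rangle$ we have $\alpha'(e_1)=\alpha(e)$ and $\alpha'(e_2)=1_A$. By Lemma~\ref{lemma:ArchdeaconModification} the alternative choice $\alpha'(e_1)=1_A$, $\alpha'(e_2)=\alpha(e)$ differs from this one by a local voltage modification at $w_e$, which is legitimate since $w_e$ carries no loop (even when $e$ is a loop of $G$, the edges $e_1$ and $e_2$ form a digon between $w_e$ and $u$), so it is harmless to adopt this convention simultaneously for every edge.

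Next I would unwind the two constructions. In $(G')^{\alpha'}$ the edge $(e_1,a)$ runs from $u^a$ to $w_e^{\,a\alpha(e)}$, and the edge $(e_2,b)$ runs from $w_e^{\,b}$ to $v^{b}$ since $\alpha'(e_2)=1_A$; in particular $(e_1,a)$ and $(e_2,a\alpha(e))$ meet exactly at $w_e^{\,a\alpha(e)}$, and their concatenation is a path from $u^a$ to $v^{\,a\alpha(e)}$. In $G^\alpha$ the edge $(e,a)$ runs from $u^a$ to $v^{\,a\alpha(e)}$. This makes the correspondence visible: subdividing $(e,a)$ in $G^\alpha$ should produce exactly the path $(e_1,a)(e_2,a\alpha(e))$.

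I would then define a map $\phi$ from the subdivision of $G^\alpha$ to $(G')^{\alpha'}$ by: $\phi$ is the identity on the fibre $V\times A$ over the original vertices; $\phi$ sends the subdivision vertex of the edge $(e,a)$ of $G^\alpha$ to $w_e^{\,a\alpha(e)}$; and $\phi$ sends the two halves of the subdivided edge $(e,a)$ to $(e_1,a)$ and $(e_2,a\alpha(e))$, respectively. The remaining work is to check that $\phi$ is a bijection on vertices and on edges (in both cases because $a\mapsto a\alpha(e)$ permutes $A$, so $\{w_e^{\,a\alpha(e)}:a\in A\}=\{w_e\}\times A$ and similarly for the edge fibres) and that $\phi$ preserves heads and tails; the only computation is the pair of identities $h((e_1,a))=w_e^{\,a\alpha(e)}=t((e_2,a\alpha(e)))$ and $h((e_2,a\alpha(e)))=v^{\,a\alpha(e)\cdot 1_A}=v^{\,a\alpha(e)}=h((e,a))$. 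Since subdivision and suppression of degree-two vertices are mutually inverse operations, suppressing the fibre over the vertices $w_e$ in $(G')^{\alpha'}$ returns $G^\alpha$, which is the content of the lemma.

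I expect the argument to be essentially bookkeeping; the one spot where a naive attempt goes astray is the index shift in the group coordinate — the subdivision vertex of $(e,a)$ must be matched with $w_e^{\,a\alpha(e)}$ rather than $w_e^{\,a}$, and correspondingly the lift of $e_2$ that continues the path out of $u^a$ is $(e_2,a\alpha(e))$, not $(e_2,a)$ — so I would state that identification explicitly before checking incidences. A secondary point deserving a sentence is the loop case, so as to be certain the vertices $w_e$ are genuinely of degree two, hence suppressible, and loop-free even when $e$ is a loop of $G$.
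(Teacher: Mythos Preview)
The paper does not supply its own proof of this lemma; it is quoted verbatim from Archdeacon's paper \cite[Lemma 3.4]{A} and left unproven here. So there is no in-paper argument to compare against.

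Your argument is correct and is the natural one: you show that $(G')^{\alpha'}$ is precisely the subdivision of $G^\alpha$, by matching the two-edge path $(e_1,a)(e_2,a\alpha(e))$ in $(G')^{\alpha'}$ with the subdivided edge $(e,a)$ of $G^\alpha$, and then observe that suppressing the degree-two fibre over the $w_e$'s recovers $G^\alpha$. The index shift you flag --- that the subdivision vertex of $(e,a)$ corresponds to $w_e^{\,a\alpha(e)}$ rather than $w_e^{\,a}$ --- is exactly the point where a careless write-up would go wrong, and your treatment of it is clean. The remark on loops is also apposite: even when $e$ is a loop of $G$, the vertex $w_e$ is loop-free in $G'$, so both the local voltage modification of Lemma~\ref{lemma:ArchdeaconModification} and the suppression step are legitimate. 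One cosmetic note: the word ``isomorphic'' in the lemma statement is being used in the topological-graph-theory sense (same graph up to subdivision/suppression of degree-two vertices), which you implicitly adopt; it would do no harm to say so explicitly at the outset.
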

We now describe how to transfer an ordinary voltage graph embedding to the (subdivided) medial graph.
\begin{construction}\label{construction:VoltTransferred}\cite[\S 4]{A}  Given $\langle G\rightarrow S,\alpha \rightarrow A\rangle$, the subdivided $G'$, and the corresponding subdivided voltage assignment, consider the medial graph $M$ of $G\rightarrow S$.  Subdivide every edge of $M$ to obtain the subdivided medial graph $M'$.  We choose a preferred direction $e^+$ (recall that this is denoted $e$) on a fixed edge $e$ of $G$ and let $v_e$ denote the corresponding vertex of the subdivided medial graph.  There are two corners of $G\rightarrow S$ at $t(e)$ and containing $e$ that correspond to two edges $e_1$, $e_2$ incident to $v_e$ in the subdivided medial graph.  Direct these two edges $e_1^+$ and $e_2^+$ so that they have head vertex $v_e$, and assign the voltage $\alpha(e^+)$ to both $e_1^+$ and $e_2^+$ and $\alpha(e^{-1})$ to $e_1^-$ and $e_2^-$.  The other darts of $M'$ whose head or tail vertex is $v_e$ are assigned voltage $1_A$.  We repeat this procedure for all other edges of $G$.  Since there are no loops in the subdivided $G'$ or the subdivided medial graph $M'$, we see that this \textit{transferred voltage assignment} to $M'$ is well defined, and it is consequence of Lemma \ref{lemma:ArchdeaconModification} that a choice of preferred direction can be reversed by a local voltage modification (see Figure \ref{fig:ArchdeaconMedial}).\hfill $\Box$ \end{construction}

\begin{figure}[H]
\begin{center}
\includegraphics[scale=.4]{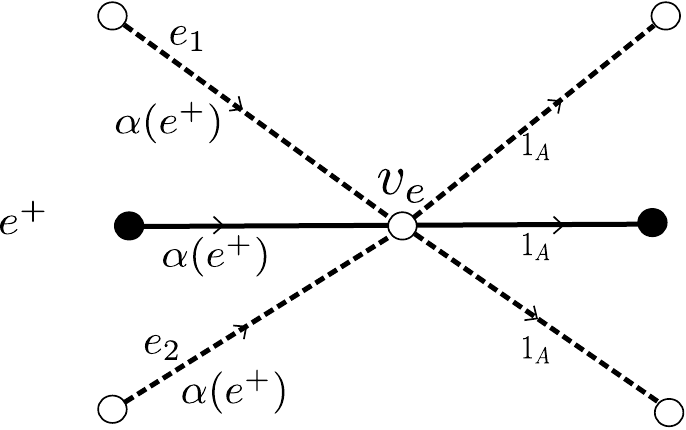}
\caption{The transferred medial voltage assignment.  The vertices and edges of $M'$ are white and dashed, respectively.}\label{fig:ArchdeaconMedial}
\end{center}
\end{figure}

Since the medial graph of an embedding combinatorially captures the incidence of faces and edges, Theorem \ref{theorem:DerivedMedial} is valuable for our purposes.

\begin{theorem}\label{theorem:DerivedMedial}\cite[Theorem 4.1]{A} Consider $\langle G\rightarrow S, \alpha \rightarrow A\rangle$ and let $M'$ be the subdivided medial graph with the transferred voltage assignment.  The derived graph $(M')^\alpha$ is the medial graph of the derived graph of the subdivided $G'$.\end{theorem}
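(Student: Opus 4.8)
The plan is to deduce the result from the basic principle that the medial graph is a \emph{local} feature of the band decomposition that $G\to S$ determines: the vertices of $M$ are the $1$-bands of this decomposition and the edges of $M$ record the corners at which $1$-bands abut $0$-bands, so $M$ lies entirely in the union of the $0$- and $1$-bands and never meets the interior of a $2$-band. First I would simplify the target. By Lemma~\ref{lemma:ArchdeaconSubdivision} the derived graph of the subdivided $G'$ is carried isomorphically onto $G^{\alpha}$, so it is enough to show that $(M')^{\alpha}$ --- the derived graph of the subdivided medial graph equipped with the transferred voltage assignment --- is the subdivided medial graph of the derived embedding $G^{\alpha}\to S^{\alpha}$; I will write $M'_{\alpha}$ for the latter.

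Next comes the covering-theoretic core. The group $A$ acts on $S^{\alpha}$ by automorphisms of the embedded graph $G^{\alpha}\to S^{\alpha}$, and the only fixed points of a nontrivial element of $A$ are the centers of the $2$-bands (the stabilizer of a face lying over a face $f$ is the cyclic group generated by the net voltage of the boundary walk of $f$, and it fixes only that face's center). Since $M'_{\alpha}$ avoids every such center, $A$ acts \emph{freely} on $M'_{\alpha}$; its quotient is the subdivided medial graph of $(G^{\alpha}\to S^{\alpha})/A = (G\to S)$, namely $M'$ itself. Hence $M'_{\alpha}\to M'$ is a regular $|A|$-fold covering graph, and because $M'$ is connected (as $G\to S$ is cellular) every such covering is a derived graph: $M'_{\alpha}\cong (M')^{\beta}$ for some voltage assignment $\beta\colon D(M')\to A$. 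The theorem has now been reduced to the single assertion that $\beta$ can be taken to be the transferred voltage assignment of Construction~\ref{construction:VoltTransferred}.

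Identifying $\beta$ is a purely local computation in a single vertex-star neighborhood, and I expect this to be the main obstacle: everything rests on correctly tracking the index shift introduced by the subdivision. For a corner $c$ of $G\to S$ at a vertex $v$ bounded by darts on edges $e$ and $e'$, one must determine which lift of $c$ in $G^{\alpha}\to S^{\alpha}$ joins which lifts of the medial vertices $v_{e}$ and $v_{e'}$. Unwinding the derived-graph definition, the midpoint of the lift of $e$ through $v^{b}$ sits at level $b\,\alpha(e^{+})$, while after subdivision the remaining half of $e$ carries voltage $1_{A}$ and introduces no further shift; this is exactly the asymmetry built into Construction~\ref{construction:VoltTransferred} --- voltage $\alpha(e^{+})$ on the half-edges meeting $v_{e}$ from the $t(e)$ side, and $1_{A}$ on those meeting it from the $h(e)$ side --- and pictured in Figures~\ref{fig:ArchdeaconMedial} and~\ref{fig:PsiMap}. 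Carrying out this bookkeeping for both half-edges across $c$ shows that $M'_{\alpha}\to M'$ carries precisely the transferred voltages; the arbitrary choices in Construction~\ref{construction:VoltTransferred} (preferred directions on the edges of $G$, the order of subdivisions) cause no trouble, since by Lemma~\ref{lemma:ArchdeaconModification} they change the transferred assignment only by local voltage modifications and hence do not change the derived graph. Finally, to obtain the statement at the level of embeddings one checks that the faces correspond: a face of $M'$ about a vertex $v$ of $G$ is bounded by a walk of net voltage $1_{A}$ (the $\alpha(e^{\pm})$ contributions telescope around $v$), so it lifts to $|A|$ faces, one about each $v^{a}$, matching the vertex-faces of $M'_{\alpha}$; a face of $M'$ about a face $f$ of $G\to S$ is bounded by a walk whose net voltage equals that of the boundary walk of $f$, matching the face-faces of $M'_{\alpha}$ lying over $f$. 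This yields the asserted identification of derived embeddings.
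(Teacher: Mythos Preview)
The paper does not give its own proof of this theorem: it is quoted from Archdeacon's paper \cite{A} (as Theorem~4.1 there) and stated without proof, so there is no in-paper argument to compare against.

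That said, your outline is a reasonable and coherent route to the result. The reduction via Lemma~\ref{lemma:ArchdeaconSubdivision} is correct, and the covering-theoretic step---that $A$ acts freely on the subdivided medial graph of $G^{\alpha}\to S^{\alpha}$ because the only possible branch points of $S^{\alpha}\to S$ are face centers, which $M'_{\alpha}$ avoids---is the right observation. The one place where your sketch is genuinely incomplete is the identification of the voltage assignment $\beta$ with the transferred assignment: saying ``$M'_{\alpha}\to M'$ is a regular $|A|$-fold cover, hence $M'_{\alpha}\cong (M')^{\beta}$ for \emph{some} $\beta$'' is not enough, since many inequivalent voltage assignments can realize a given regular cover, and you must also pin down the specific identification of sheets (a labeling of the fiber over a basepoint by $A$) before $\beta$ is determined even up to local modifications. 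Your local computation in a vertex-star neighborhood is the right idea for this, but it needs to be carried out explicitly for both corners adjacent to a given medial vertex $v_{e}$ and checked to be globally consistent with a single sheet-labeling; in practice the cleanest way is to \emph{define} the bijection between $(M')^{\alpha}$ and $M'_{\alpha}$ on vertices (sending $v_{e}^{\,b}$ to the midpoint of $e^{b}$ and the degree-two subdivision vertex of a corner at $v$ in sheet $b$ to the corresponding corner at $v^{b}$) and then verify edge-by-edge, using Figure~\ref{fig:ArchdeaconMedial}, that adjacency is preserved. Once that is done, your final paragraph on matching vertex-faces and face-faces is correct and completes the embedding-level statement.
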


Given $G\rightarrow S$, two consecutive darts $d_1d_2$ of a facial boundary walk, and a corner of $G\rightarrow S$ that is bounded by $G[\left \{d_1,d_2\right\rbrace]$, we define the operation of \textit{drawing an edge across a corner} to be the result of adding an edge joining $t(d_1)$ and $h(d_2)$ that intersects that corner.  Note that if $d_1$ and $d_2$ are opposites of each other, the edge drawn is a loop. Define the \textit{total graph} $T(G)$ of $G\rightarrow S$ to be the graph formed by subdividing each edge of $G$ and drawing in subdivided edges across each corner.  Thus, the total graph has both the medial graph and $G$ as minors.  We define an \textit{extended voltage assignment to $T(G)$} to be an assignment that is created by subdividing the ordinary voltage assignment of $G$, adding subdivided edges across each corner, and including the transferred voltage assignment from $G'$ to $M'$ described in Construction \ref{construction:VoltTransferred}.  We denote the extended voltage assignment $\alpha_E \rightarrow A$.  Note the existence of a function $\psi\colon T(G) \rightarrow T(G)$, which is the identity map on $G'$ and projects the darts (and edges) of $M'$ onto the darts of $G'$ bounding the corner of $G\rightarrow S$ across which the darts (and edges) are drawn, as in Figure \ref{fig:PsiMap}, where the images of vertices of $M'$ are further detailed.  Note that for each dart $d$ of $T(G)$, the extended voltage assignment $\alpha_E$ satisfies $\alpha(\psi(d))=\alpha_E(d)$.\medskip

\begin{figure}[H]
\begin{center}
\includegraphics[scale=.5]{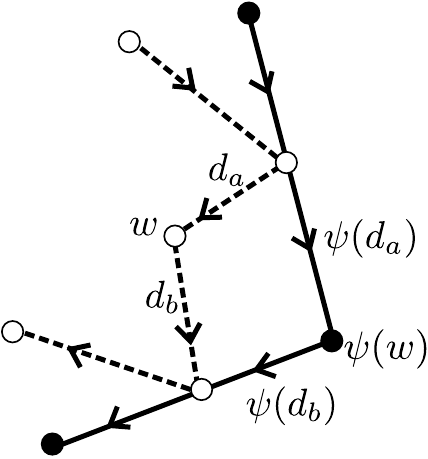}
\caption{An elucidation of the definition of the map $\psi\colon M'\rightarrow G'$ for one corner of $G\rightarrow S$.  The subdivided medial graph has white vertices and dashed edges.}\label{fig:PsiMap}
\end{center}
\end{figure} 

\section{Coset lemmas and coset theorems}\label{section:cosets} 

Given $\langle G\rightarrow S, \alpha \rightarrow A\rangle$, we examine specially constructed subgroups of $A$ corresponding to special induced subcomplexes of $\left \{C_i(S),\partial\right \rbrace$, and we establish relationships between certain cosets of these subgroups and induced subcomplexes of $\left \{C_i(S^\alpha),\partial\right \rbrace$.

\subsection{Cosets and constructions not requiring Archdeacon's medial-graph enhancement}\label{subsection:NonArchdeacon}

Following \cite[\S 2.5.1]{GT}, the net voltages of the closed walks in $G$ based at a vertex $v$ form a group, which is denoted $A(v)$.  We begin with a result of Gross and Alpert.

\begin{theorem}\label{theorem:GrossAlpert}\cite[Theorem 2.5.1]{GT} Given $\langle G,\alpha \rightarrow A\rangle$ and a fixed vertex $v$ of $G$, the vertices $v^a$ and $v^b$ are in the same component of $G^\alpha$ if and only if $a^{-1}b$ is an element of $A(v)$.\end{theorem}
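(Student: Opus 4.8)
The plan is to prove the two implications directly from the definition of the derived graph $G^\alpha$ and from the bookkeeping about lifts of walks already recorded in Section~\ref{section:definitions}. Recall that, by definition, $A(v)$ is the set $\{\omega(W) : W$ a closed walk in $G$ based at $v\}$, that the lift $W_v^a$ of a walk $W$ based at $v$ begins at $v^a$, and that (as noted in the excerpt) $W_v^a$ ends at the vertex where $W_v^{a\omega(W)}$ begins, namely at $v^{a\omega(W)}$. These facts carry essentially all the content.

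For the \emph{if} direction, suppose $a^{-1}b \in A(v)$ and choose a closed walk $W = d_1 d_2 \cdots d_m$ based at $v$ with $\omega(W) = a^{-1}b$. Lifting $W$ starting at $v^a$ produces the walk $W_v^a$ in $G^\alpha$, which begins at $v^a$ and ends at $v^{a\omega(W)} = v^{a(a^{-1}b)} = v^b$. This walk witnesses that $v^a$ and $v^b$ lie in the same component of $G^\alpha$.

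For the \emph{only if} direction, suppose $v^a$ and $v^b$ lie in the same component of $G^\alpha$ and pick a walk $\widetilde W = \tilde d_1 \cdots \tilde d_m$ in $G^\alpha$ from $v^a$ to $v^b$. Applying the projection $p$ dart by dart yields a sequence $W = p(\tilde d_1) \cdots p(\tilde d_m)$ of darts of $G$; since $p$ sends heads to heads and tails to tails, consecutive darts of $W$ still meet head-to-tail, so $W$ is a walk, and since $\widetilde W$ starts and ends over $v$, it is a closed walk based at $v$, so $\omega(W) \in A(v)$. It then remains to identify the final vertex of $\widetilde W$. Here I would invoke uniqueness of lifts: the walk in $G^\alpha$ lying over a given walk $W$ of $G$ and starting at a prescribed vertex is unique, because once the tail of a lifted dart is fixed the head is forced by the rule that $e^a$ has tail $t(e)^a$ and head $h(e)^{a\alpha(e)}$. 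Thus $\widetilde W = W_v^a$, so $\widetilde W$ ends at $v^{a\omega(W)}$; comparing with the fact that it ends at $v^b$, and using that the vertices $v^c$ in the fibre over $v$ carry distinct labels $c \in A$, we conclude $a\omega(W) = b$, i.e. $a^{-1}b = \omega(W) \in A(v)$.

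The only genuinely delicate point is the uniqueness-of-lifts step in the second implication; everything else is immediate from the edge-labelling of $G^\alpha$. I expect that step to be the main thing requiring care, and it should be dispatched by a short induction on the length $m$ of the walk (or by citing the standard unique path-lifting property of the covering $p \colon G^\alpha \to G$), after which the two endpoint computations close the argument.
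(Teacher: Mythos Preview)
Your argument is correct; both implications are handled cleanly, and the uniqueness-of-lifts step is the right ingredient and is easily justified by the one-line induction you indicate.

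There is nothing to compare against, however: the paper does not supply its own proof of this theorem. Theorem~\ref{theorem:GrossAlpert} is quoted verbatim from \cite[Theorem~2.5.1]{GT} and is used as a black box throughout Section~\ref{section:cosets}. Your write-up is essentially the classical proof one finds in Gross--Tucker, phrased in the notation of the present paper, so there is no meaningful methodological difference to discuss.
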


Lemma \ref{lemma:ConnectedComponentsSalpha} establishes a relationship between left cosets of $A(v)$ in $A$ and the connected components of $S^\alpha$.  Lemma \ref{lemma:ConnectedComponentsSalpha} is a reformulation and strengthening of Theorem \ref{theorem:GrossAlpert}.

\begin{lemma}\label{lemma:ConnectedComponentsSalpha} Given $\langle G\rightarrow S, \alpha \rightarrow A\rangle$ and a fixed vertex $v$ of $G$, the map \[\phi_v \colon \left \{ aA(v):\ a\in A\right \rbrace \rightarrow \left \{ S_v^a: a\in A\right \rbrace \] defined by $\phi_v(aA(v))=S_v^a$ is a bijection.\end{lemma}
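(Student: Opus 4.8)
The plan is to derive the lemma from Theorem~\ref{theorem:GrossAlpert} together with the elementary fact that a cellularly embedded graph has the same connectivity as the surface containing it. First I would dispose of surjectivity, which is automatic: every element of the codomain is of the form $S_v^a$ for some $a\in A$, and $S_v^a=\phi_v(aA(v))$ by definition. So the entire content is that $\phi_v$ is well defined and injective, i.e.\ that
\[
aA(v)=bA(v)\quad\Longleftrightarrow\quad S_v^a=S_v^b .
\]
Now $aA(v)=bA(v)$ is the same condition as $a^{-1}b\in A(v)$, and by Theorem~\ref{theorem:GrossAlpert} this holds exactly when $v^a$ and $v^b$ lie in the same component of $G^\alpha$, that is, when $G_v^a=G_v^b$. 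Hence it suffices to show that two vertices of $G^\alpha$ lie in the same component of $G^\alpha$ if and only if they lie in the same component of $S^\alpha$.

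For the forward implication I would note that the derived embedding restricts to an embedding of the connected graph $G_v^a$ into $S^\alpha$; the image of a connected space is connected, so it lies in a single component of $S^\alpha$, and therefore $G_v^a=G_v^b$ forces $S_v^a=S_v^b$. For the reverse implication I would use that $G\rightarrow S$ is cellular: $S\setminus G$ is a disjoint union of open $2$-cells, and lifting through the covering map $p$ shows that $S^\alpha\setminus G^\alpha$ is likewise a disjoint union of open $2$-cells, each of whose boundary walk is a closed walk contained in a single component of $G^\alpha$. Consequently every component of $S^\alpha$ is obtained from exactly one component of $G^\alpha$ by attaching the $2$-cells whose boundaries lie in it, so the assignment $G_v^a\mapsto S_v^a$ is a well-defined bijection on components; in particular $S_v^a=S_v^b$ implies $G_v^a=G_v^b$. (Equivalently, one may observe that $S^\alpha$ is a CW complex with $1$-skeleton $G^\alpha$, so any path in $S^\alpha$ from $v^a$ to $v^b$ can be homotoped rel endpoints into $G^\alpha$.) The argument applies verbatim when the base embedding lies in a surface with boundary, since faces still lift to discs whose boundaries lie in $G^\alpha$.

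The step I expect to require the most care is the reverse implication: one must verify that each open $2$-cell of $S^\alpha$ meets, and attaches along, exactly one component of $G^\alpha$. This is precisely where cellularity of the base embedding is used — every face is a disc whose boundary walk lies in $G$ — and it is also what makes the induced map on components injective rather than merely surjective. Once that bookkeeping is settled, combining it with Theorem~\ref{theorem:GrossAlpert} yields the claimed bijection at once.
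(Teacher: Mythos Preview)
Your proposal is correct and follows essentially the same route as the paper: both reduce the question about components of $S^\alpha$ to the corresponding question about components of $G^\alpha$ via the observation that $G^\alpha$ is the $1$-skeleton of the cellular complex $S^\alpha$, and then invoke Theorem~\ref{theorem:GrossAlpert}. The only difference is one of emphasis: the paper disposes of the $1$-skeleton correspondence in a single clause (``$G_v^a$ is the $1$-skeleton of $S_v^a$''), whereas you spell out both directions of $G_v^a=G_v^b\Leftrightarrow S_v^a=S_v^b$ carefully; conversely, the paper inserts a path-lifting remark (that every vertex of $G^\alpha$ is joined by a lifted path to some $v^a$) which your formulation does not need, since surjectivity onto the stated codomain is automatic.
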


\begin{proof} Since $G_v^a$ is the 1-skeleton of $S_v^a$, it suffices to verify that $\phi_v$ is a bijection between the left cosets of $A(v)$ in $A$ and the connected components of $G^\alpha$.  Since $G$ is connected, for any vertex $u\in G$, there is a $u$-$v$-path $H$ that lifts to a total of $|A|$ distinct $p^{-1}(u)$-$p^{-1}(v)$-paths, each one joining a vertex in the fiber over $u$ to a distinct vertex in the fiber over $v$.  Thus, it suffices to show that $G_v^a=G_v^b$ if and only if $a^{-1}b\in A(v)$, which follows from Theorem \ref{theorem:GrossAlpert}.\end{proof}

Before we proceed any further, we need two technical lemmas.  The proof of Lemma \ref{lemma:CosetRedux} is a straightforward application of the free and transitive action of a group on itself by left multiplication, and therefore omitted.

\begin{lemma}\label{lemma:CosetRedux} Let $X$, $Y$, and $Z$ denote groups such that $X\le Y\le Z$.  For each $z\in Z$, the left coset $zY$ can be partitioned into left cosets $yX$ for $y\in zY$.\end{lemma}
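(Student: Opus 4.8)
\textbf{Proof proposal for Lemma \ref{lemma:CosetRedux}.}

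The plan is to show that the left coset $zY$ is a disjoint union of certain left cosets of $X$, namely those left cosets $yX$ with $y \in zY$, and that these are pairwise disjoint. First I would observe the standard fact that the left cosets of $X$ in $Z$ partition $Z$; in particular $zY \subseteq Z$ is a union of (possibly partial) intersections $zY \cap yX$ as $y$ ranges over coset representatives. The key point is that whenever $yX$ meets $zY$, it is entirely contained in $zY$: if $y_0 \in yX \cap zY$, then $y_0 = zy'$ for some $y' \in Y$, and since $X \le Y$ we have $yX = y_0 X \subseteq zYX = zY$ (using $YX = Y$ because $X \le Y$). Hence every left coset of $X$ that intersects $zY$ lies inside $zY$, and conversely every element $y \in zY$ lies in the coset $yX \subseteq zY$. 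This exhibits $zY$ as the union $\bigcup_{y \in zY} yX$, and since distinct left cosets of $X$ are disjoint, this union is a partition.

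Alternatively, and perhaps more cleanly, I would phrase it via the free and transitive left action of $Z$ on itself: translating by $z^{-1}$ carries $zY$ bijectively onto $Y$ and carries any left coset $yX$ with $y \in zY$ onto the left coset $(z^{-1}y)X$ with $z^{-1}y \in Y$. So the statement reduces to the case $z = 1_Z$, i.e.\ to the assertion that $Y$ is partitioned by the left cosets $yX$ with $y \in Y$ — which is immediate since $X \le Y$ and the left cosets of $X$ in the group $Y$ partition $Y$. Pulling this partition back by left translation by $z$ gives the desired partition of $zY$.

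I do not anticipate a genuine obstacle here; the only thing to be careful about is making explicit that a left coset $yX$ appearing in the decomposition really does sit inside $zY$ (so that the indexing ``$y \in zY$'' is the honest one), rather than merely intersecting it — this is exactly where $X \le Y$ is used, through the identity $YX = Y$. Everything else is the elementary theory of cosets together with the regularity of the left-multiplication action invoked in the lemma's statement.
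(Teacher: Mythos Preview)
Your proposal is correct, and your second (translation) argument is precisely the approach the paper indicates: the paper omits the proof but remarks that it is ``a straightforward application of the free and transitive action of a group on itself by left multiplication,'' which is exactly your reduction via left translation by $z^{-1}$ to the standard partition of $Y$ by left cosets of $X$. Your first argument is a fine direct alternative, but the second already matches the paper's intended method.
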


 Lemma \ref{lemma:GrossAlpertRedux} establishes a relationship between the left cosets of a voltage group of net voltages of closed walks in a connected subcomplex $X$ and components of the fiber over $X$ contained in a specific component of $S^\alpha$.  The general formulation of Lemma \ref{lemma:GrossAlpertRedux} allows for easier applications to a wide range of cases.

\begin{lemma}\label{lemma:GrossAlpertRedux}  Let $S_B$ denote a surface, possibly with boundary, and let $G_B$ denote a connected graph properly embedded in $S_B$.  Let $A_B$ denote a finite group.  Fix $v\in C(G_B)$, and let $X$ denote a connected subcomplex of $S_B$ containing $v$.  Consider $\langle G_B\rightarrow S_B,\alpha \rightarrow A_B\rangle$.  Let $A_B(v)$ denote the local voltage group of closed walks based at $v$, let $A_B(v,X)$ denote the group of net voltages of closed walks in $X$ based at $v$.  For a fixed $a\in A_B$, the map \[\phi_X\colon \left \{bA_B(v,X):\ b\in aA_B(v)\right \rbrace \rightarrow \left \{X_v^b:\ b\in aA_B(v)\right \rbrace \] defined by \[\phi_X(bA_B(v,X))=X_v^b\] is a bijection.\end{lemma}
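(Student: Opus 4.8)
The plan is to mimic the proof of Lemma \ref{lemma:ConnectedComponentsSalpha}, but applied internally to the subcomplex $X$ rather than to all of $S$. Since $X$ is connected and contains $v$, and the derived embedding $X^\alpha$ sits inside $S^\alpha$, the component $X_v^b$ is determined by its $1$-skeleton $G_X := G_B \mycolon X$, which is a connected subgraph of $G_B$ through $v$. The key observation is that $A_B(v,X)$ is precisely the local voltage group $A_{G_X}(v)$ of the ordinary voltage graph $\langle G_X, \alpha|_{D(G_X)} \rightarrow A_B\rangle$ obtained by restriction: net voltages of closed walks \emph{in} $X$ based at $v$ are exactly net voltages of closed walks in $G_X$ based at $v$, since walks in the complex $X$ traverse only darts of $G_X$. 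So Theorem \ref{theorem:GrossAlpert}, applied to $\langle G_X, \alpha \rightarrow A_B\rangle$, says that two lifts $v^b, v^{b'}$ lie in the same component of $G_X^\alpha$ if and only if $b^{-1}b' \in A_B(v,X)$; equivalently $bA_B(v,X) = b'A_B(v,X)$.

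The remaining work is bookkeeping about which components of $G_X^\alpha$ (equivalently, of $X^\alpha$) lie inside the fixed component $S_v^a$ of $S^\alpha$. First I would note that $X_v^b$ is a subcomplex of $S_v^b$, and by Lemma \ref{lemma:ConnectedComponentsSalpha} (or directly by Theorem \ref{theorem:GrossAlpert} applied to all of $G_B$, using that $v \in X$), $S_v^b = S_v^a$ if and only if $b \in aA_B(v)$. Since $A_B(v,X) \le A_B(v) \le A_B$ — the first inclusion holds because a closed walk in $X$ based at $v$ is in particular a closed walk in $G_B$ based at $v$ — Lemma \ref{lemma:CosetRedux} tells us that the coset $aA_B(v)$ is partitioned into cosets $bA_B(v,X)$ with $b \in aA_B(v)$. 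Hence both the domain and codomain of $\phi_X$ are genuinely indexed by this partition, and the map is well defined: if $bA_B(v,X) = b'A_B(v,X)$ then $b^{-1}b' \in A_B(v,X)$, so $v^b$ and $v^{b'}$ lie in the same component of $G_X^\alpha$, whence $X_v^b = X_v^{b'}$.

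For injectivity, suppose $X_v^b = X_v^{b'}$ with $b, b' \in aA_B(v)$; then $v^b$ and $v^{b'}$ are vertices of the same component of $G_X^\alpha$, so the restricted version of Theorem \ref{theorem:GrossAlpert} gives $b^{-1}b' \in A_B(v,X)$, i.e. $bA_B(v,X) = b'A_B(v,X)$. For surjectivity, any component of $X^\alpha$ inside $S_v^a$ contains some vertex of the fiber over some vertex $u \in V(G_X)$; since $G_X$ is connected, a $u$-$v$-path in $G_X$ lifts to a path in $G_X^\alpha$ ending at some vertex $v^b$ in that same component, and $v^b \in S_v^a$ forces $b \in aA_B(v)$, so the component is $X_v^b = \phi_X(bA_B(v,X))$. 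The main obstacle — really the only subtle point — is making the identification $A_B(v,X) = A_{G_X}(v)$ airtight, i.e. being careful that ``closed walk in the subcomplex $X$'' is the same as ``closed walk in the subgraph $G_B \mycolon X$'' so that Theorem \ref{theorem:GrossAlpert} genuinely applies to the restricted voltage graph; once that is granted, everything else is the same coset-partition argument already used twice above.
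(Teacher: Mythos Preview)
Your proposal is correct and follows essentially the same approach as the paper: reduce to the $1$-skeleton $G(X)$ of $X$, identify $A_B(v,X)$ with the local voltage group of the restricted ordinary voltage graph $\langle G(X),\alpha\rightarrow A_B\rangle$, and apply Theorem~\ref{theorem:GrossAlpert} there together with the coset partition from Lemma~\ref{lemma:CosetRedux}. The only presentational difference is that the paper first reduces to the case $a=1_A$ via the transitive $A$-action, whereas you carry a general $a$ throughout; the underlying argument is the same.
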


\begin{proof} We consider the case for which the domain of $\phi_X$ is the set of left cosets of $A_B(v,X)$ in $A_B(v)$, which is the case for which $a=1_A$.  By Lemma \ref{lemma:CosetRedux} and the transitive left action of $A$ on the fiber over $X$ this case will suffice.  Note that $G[X]_v^b$ is the 1-skeleton of $X_v^b$.  It suffices to show that $\phi_X$ is a bijection between the $bA_B(v,X)$ and the $G[X]_v^b$.  Since $G[X]$ is connected, for any other vertex $u$ of $G(X)$, there is a $u$-$v$-path in $G[X]$ that lifts to $|A_B|$ distinct $p^{-1}(u)$-$p^{-1}(v)$-paths in $G[X]^\alpha$, each one joining a vertex in the fiber over $u$ to a distinct vertex in the fiber over $v$.  Consider Theorem \ref{theorem:GrossAlpert}, and let $G[X]$, $A_B(v,X)$, and $A_B$ take the place of $G$, $A(v)$, and $A$, respectively.  It follows that $G[X]_v^b=G[X]_v^c$ if and only if $b^{-1}c$ is an element of $A_b(v,X)$.  The result follows.\end{proof}

Let $I \in C_2(S)$ be such that $S[I]$ is connected and assume that $v$ is a vertex of $G[I]$.  Let $A(v,S[I])$ denote the subgroup of $A(v)$ consisting of net voltages of closed walks in $G[I]$ based at $v$.  Note that \[A(v,S[I]) \le A(v) \le A.\]  By an application of Lemma \ref{lemma:CosetRedux} to the groups $A(v,S[I])$, $A(v)$, and $A$, we see that for each $a\in A$, the left coset $aA(v)$ can be partitioned into left cosets of the form $bA(v,S[I])$ for $b\in aA(v)$.  Lemma \ref{lemma:ConnectedComponentsB} establishes a relationship between left cosets of $A(v,S[I])$ contained in a left coset $aA(v)$ and components of $S[I]^\alpha$ contained in $S_v^a$.

\begin{lemma}\label{lemma:ConnectedComponentsB}  Given $\langle G\rightarrow S, \alpha \rightarrow A\rangle$ and a fixed $a\in A$.  The map \[ \phi_I \colon \left \{bA(v,S[I]):\ b\in aA(v)\right \rbrace \rightarrow \left \{S[I]_v^b:\ b\in aA(v)\right \rbrace\] defined by $\phi_I(bA(v,I)) = S[I]_v^b$ is a bijection.\end{lemma}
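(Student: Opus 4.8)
The plan is to reduce Lemma \ref{lemma:ConnectedComponentsB} to the already-proved Lemma \ref{lemma:GrossAlpertRedux} by an appropriate choice of ambient data. Concretely, I would apply Lemma \ref{lemma:GrossAlpertRedux} with the substitution $S_B := S$, $G_B := G$, $A_B := A$, and $X := S\mycolon I$. Since $S\mycolon I$ is assumed connected and contains the fixed vertex $v$, the hypotheses of Lemma \ref{lemma:GrossAlpertRedux} are met: $G$ is a connected graph properly embedded in $S$ (in the sense used throughout the paper, allowing boundary), $A$ is finite, $A(v)$ is the local voltage group at $v$, and $A(v, S\mycolon I)$ is precisely what Lemma \ref{lemma:GrossAlpertRedux} calls $A_B(v,X)$ — the group of net voltages of closed walks in $X = S\mycolon I$ based at $v$. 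With these identifications, the map $\phi_X$ of Lemma \ref{lemma:GrossAlpertRedux} \emph{is} the map $\phi_I$ of the present statement, and Lemma \ref{lemma:GrossAlpertRedux} asserts it is a bijection for every fixed $a \in A$. That is essentially the whole argument.

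The one point that deserves a sentence of care is the containment chain $A(v, S\mycolon I) \le A(v) \le A$, which licenses the application of Lemma \ref{lemma:CosetRedux} inside the proof of Lemma \ref{lemma:GrossAlpertRedux} (and which is already noted in the text immediately preceding the statement): a closed walk in $G\mycolon I$ based at $v$ is in particular a closed walk in $G$ based at $v$, so its net voltage lies in $A(v)$, giving the first inclusion; the second is the definition of $A(v)$ as a subgroup of $A$. Hence the left coset $aA(v)$ decomposes into left cosets $bA(v, S\mycolon I)$ with $b \in aA(v)$, so the domain of $\phi_I$ is well defined and matches the domain of $\phi_X$.

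I do not expect a genuine obstacle here; the content of the lemma has been front-loaded into the general formulation of Lemma \ref{lemma:GrossAlpertRedux}, and the present statement is the intended specialization. The only thing to verify carefully is that the two descriptions of the codomain agree — that the component $(S\mycolon I)_v^b$ of the induced embedding $(S\mycolon I)^\alpha$ coincides with what Lemma \ref{lemma:GrossAlpertRedux} denotes $X_v^b$ — but this is immediate from the paper's notational convention for induced ordinary voltage graph embeddings, under which $(S\mycolon I)^\alpha$ is exactly the derived embedding of the sub-ordinary-voltage-graph-embedding carried by $S\mycolon I$, with $v$-component written $(S\mycolon I)_v^b$. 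So the proof is a short paragraph: state the substitution, invoke Lemma \ref{lemma:GrossAlpertRedux}, and observe that $\phi_I = \phi_X$ under that substitution.
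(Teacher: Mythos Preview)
Your proposal is correct and matches the paper's own proof essentially verbatim: the paper proves the lemma by invoking Lemma~\ref{lemma:GrossAlpertRedux} with exactly the substitutions $S_B=S$, $A_B=A$, $A_B(v)=A(v)$, $X=S\mycolon I$, and $A_B(v,X)=A(v,S\mycolon I)$. The additional remarks you make about the containment chain and the codomain identification are accurate but already implicit in the setup, so the short paragraph you describe is exactly what is needed.
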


\begin{proof} This follows from Lemma \ref{lemma:GrossAlpertRedux} after letting $S$ take the place of $S_B$, $A$ take the place of $A_B$, $A(v)$ take the place of $A_B(v)$, $S[I]$ take the place of $X$, and $A(v,S[I])$ take the place of $A_B(v,X)$.\end{proof}

Let $y\in C_1(G[I])$ be such that $G[y]$ is connected and assume that $v$ is a vertex of $G[y]$.  Let $A(v,G[y])$ denote the subgroup of $A(v,S[I])$ consisting of net voltages of closed walks of $G[y]$ based at $v$.  Observe that \[A(v,G[y]) \le A(v,S[I])\le A(v).\]  By an application of Lemma \ref{lemma:CosetRedux} to the groups $A(v,G[y])$, $A(v,S[I])$, and $A(v)$, we see that for a fixed $a\in A$, each of the left cosets of the form $bA(v,S[I])$ contained in $aA(v)$ can be partitioned into left cosets $cA(v,G[y])$ for $c\in bA(v,S[I])$.  Lemma \ref{lemma:ConnectedComponentsC} establishes a relationship between the left cosets of $A(v,G[y])$ contained in $bA(v,S[I])$ and the components of $G[y]^\alpha$ contained in $S[I]_v^b$.  We omit the proof, which is straightforward in light of the proof of Lemma \ref{lemma:ConnectedComponentsB}.

\begin{lemma}\label{lemma:ConnectedComponentsC} Given $\langle G\rightarrow S, \alpha \rightarrow A\rangle$ with fixed $a\in A$, $b\in aA(v)$, the map \[\phi_y \colon \left \{ cA(v,G[y]):\ c \in bA(v,S[I]) \right \rbrace \rightarrow \left \{ G[y]_v^c:\ c \in bA(v,S[I])\right \rbrace\] defined by $\phi_y(cA(v,y)) = G[y]_v^c$ is a bijection.\end{lemma}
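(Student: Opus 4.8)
The plan is to deduce this lemma from Lemma~\ref{lemma:GrossAlpertRedux} by the same substitution strategy used to prove Lemma~\ref{lemma:ConnectedComponentsB}, only applied one level further down the nested chain $A(v,G\mycolon y)\le A(v,S\mycolon I)\le A(v)\le A$. Concretely, I would invoke Lemma~\ref{lemma:GrossAlpertRedux} with $S\mycolon I$ playing the role of $S_B$, with $G\mycolon I$ playing the role of $G_B$, with $A$ playing the role of $A_B$, with $v$ playing the role of the distinguished vertex, with $G\mycolon y$ playing the role of the connected subcomplex $X$, and with the restriction of $\alpha$ to $G\mycolon I$ playing the role of the voltage assignment. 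The first task is to confirm that the hypotheses of Lemma~\ref{lemma:GrossAlpertRedux} survive this substitution: $S\mycolon I$ is a surface (possibly with boundary); $G\mycolon I$ is properly embedded in $S\mycolon I$ and is connected, being the $1$-skeleton of the connected $2$-complex $S\mycolon I$; and $G\mycolon y$ is, by the hypothesis $y\in C_1(G\mycolon I)$ together with the standing assumption that $G\mycolon y$ is connected and contains $v$, a connected subcomplex of $S\mycolon I$ containing $v$.

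Next I would track how the groups in Lemma~\ref{lemma:GrossAlpertRedux} specialize. The group ``$A_B(v)$'' of net voltages of closed walks of $G_B$ based at $v$ becomes exactly $A(v,S\mycolon I)$, and the group ``$A_B(v,X)$'' of net voltages of closed walks of $X$ based at $v$ becomes exactly $A(v,G\mycolon y)$; the inclusion $A(v,G\mycolon y)\le A(v,S\mycolon I)$ recorded before the statement is what makes the relevant cosets well defined. I then apply Lemma~\ref{lemma:GrossAlpertRedux} taking its fixed element to be our chosen $b\in aA(v)$. Its conclusion is precisely that the map \[\phi_y\colon\left\{cA(v,G\mycolon y):\ c\in bA(v,S\mycolon I)\right\rbrace\rightarrow\left\{(G\mycolon y)_v^c:\ c\in bA(v,S\mycolon I)\right\rbrace,\qquad \phi_y(cA(v,G\mycolon y))=(G\mycolon y)_v^c,\] is a bijection, which is the assertion of the lemma. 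As in the proof of Lemma~\ref{lemma:GrossAlpertRedux}, one may first reduce to $b=1_A$ using Lemma~\ref{lemma:CosetRedux} and the transitive left $A$-action on the fiber over $G\mycolon y$; the coset $bA(v,S\mycolon I)$ is then exactly the set of $c$ for which $(G\mycolon y)_v^c$ lies in the component $(S\mycolon I)_v^b$ of $(S\mycolon I)^\alpha$, which matches the parenthetical remark in the paragraph preceding the statement.

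I do not expect a genuine obstacle here; as the excerpt already notes, the proof is straightforward in light of the proof of Lemma~\ref{lemma:ConnectedComponentsB}. All the real content lives in Theorem~\ref{theorem:GrossAlpert} and its reformulation Lemma~\ref{lemma:GrossAlpertRedux}, and the only care required is the bookkeeping of the three-step subgroup nesting and the observation that ``a connected graph properly embedded in a surface with boundary'' is stable under passing from $S$ to the induced subcomplex $S\mycolon I$. Should one prefer a self-contained argument rather than citing Lemma~\ref{lemma:GrossAlpertRedux}, the one step to spell out is the path-lifting argument: since $G\mycolon y$ is connected, every $v$-$u$ path in $G\mycolon y$ lifts to $|A|$ pairwise disjoint paths in $(G\mycolon y)^\alpha$, so each component $(G\mycolon y)_v^c$ is determined by its intersection with the fiber over $v$, which by Theorem~\ref{theorem:GrossAlpert} is in turn controlled by the subgroup $A(v,G\mycolon y)$.
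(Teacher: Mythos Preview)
Your proposal is correct and matches the paper's intended approach: the paper omits the proof, saying it is ``straightforward in light of the proof of Lemma~\ref{lemma:ConnectedComponentsB},'' and that proof is exactly a substitution into Lemma~\ref{lemma:GrossAlpertRedux}. Your substitutions $S_B\leftarrow S\mycolon I$, $G_B\leftarrow G\mycolon I$, $A_B\leftarrow A$, $X\leftarrow G\mycolon y$, and $a\leftarrow b$ are the right ones, giving $A_B(v)=A(v,S\mycolon I)$ and $A_B(v,X)=A(v,G\mycolon y)$ as you say. One small caution: the hypothesis ``$S\mycolon I$ is a surface possibly with boundary'' need not hold literally (a vertex of $G\mycolon I$ can have disconnected link in $S\mycolon I$), but as you implicitly observe this does no harm, since the proof of Lemma~\ref{lemma:GrossAlpertRedux} uses only the connectivity of the $1$-skeleton of $X$ together with Theorem~\ref{theorem:GrossAlpert}, and your fallback self-contained argument handles this directly.
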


Let $W$ denote a closed walk in $G[y]$ based at $v$.  Clearly \[ \langle \omega(W)\rangle \le A(v,G[y])\le A(v,S[I])\le A(v)\le A.\]  By an application of Lemma \ref{lemma:CosetRedux} to the groups $\langle \omega(W) \rangle$, $A(v,G[y])$, and $A(v,S[I])$, we see that for a fixed $a\in A$ and $b\in aA(v)$, each of the left cosets $cA(v,G[y])$ contained in $bA(v,S[I])$ can be partitioned into left cosets of the form $d\langle \omega(W)\rangle$ for $d\in cA(v,G[y])$.  Lemma \ref{lemma:ConnectedComponentsD} establishes a relationship between the cosets of $\langle \omega(W)\rangle$  contained in $cA(v,G[y])$ and sets of consecutive lifts of $W$ in $G[y]_v^c$.

\begin{lemma}\label{lemma:ConnectedComponentsD}Fix $a\in A$, $b\in aA(v)$, and $c\in baA(v,S[I])$.  The map \[\phi_{\hat{W}}\colon \left \{ d\langle \omega(W)\rangle:\ d\in cA(v,G[y])\right \rbrace \rightarrow \left \{\hat{W}_v^d:\ d\in cA(v,G[y])\right \rbrace\] defined by $\phi_{\hat{W}}(\langle d\omega(W)\rangle)= \hat{W}_v^d$ is a bijection whose image is the set of consecutive lifts of $W$ contained in $G[y]_v^c$.\end{lemma}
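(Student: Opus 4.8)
The plan is to reduce the statement to Gross and Alpert's criterion (Theorem~\ref{theorem:GrossAlpert}) applied to the connected subgraph $G\mycolon y$, in exactly the way that Lemma~\ref{lemma:ConnectedComponentsSalpha} handled $G$ itself. First I would unwind the definition of a set of consecutive lifts. Writing $\omega$ for $\omega(W)$ and $|\omega|$ for $|\langle\omega\rangle|$, the set $\hat{W}_v^d$ consists of the lifts $W_v^{d},W_v^{d\omega},W_v^{d\omega^2},\ldots,W_v^{d\omega^{|\omega|-1}}$; since $\omega^0,\ldots,\omega^{|\omega|-1}$ exhaust $\langle\omega\rangle$ and distinct start vertices give distinct lifts, the lifts in $\hat{W}_v^d$ are indexed bijectively by the left coset $d\langle\omega\rangle$. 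Consequently the sets of consecutive lifts of $W$ partition all lifts of $W$, and $\hat{W}_v^d=\hat{W}_v^{d'}$ if and only if $W_v^{d'}\in\hat{W}_v^d$, i.e.\ $d'\in d\langle\omega\rangle$, i.e.\ $d\langle\omega\rangle=d'\langle\omega\rangle$. This gives well-definedness and injectivity of $\phi_{\hat W}$ at once.

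Next I would pin down which lifts of $W$ lie in the component $(G\mycolon y)_v^c$. Because $W$ is a closed walk in $G\mycolon y$ based at $v$, every lift $W_v^d$ is a closed walk in $(G\mycolon y)^\alpha$ based at $v^d$ and hence lies in the component $(G\mycolon y)_v^d$. Applying Theorem~\ref{theorem:GrossAlpert} with $G\mycolon y$ in place of $G$ and $A(v,G\mycolon y)$ in place of $A(v)$ — legitimate since $G\mycolon y$ is connected — yields $(G\mycolon y)_v^d=(G\mycolon y)_v^c$ if and only if $d^{-1}c\in A(v,G\mycolon y)$, equivalently $d\in cA(v,G\mycolon y)$. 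So the lifts of $W$ contained in $(G\mycolon y)_v^c$ are exactly those $W_v^d$ with $d\in cA(v,G\mycolon y)$.

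Surjectivity onto the claimed image then follows from the inclusion $\langle\omega\rangle\le A(v,G\mycolon y)$ already recorded in the excerpt: if $d\in cA(v,G\mycolon y)$ then $d\langle\omega\rangle\subseteq dA(v,G\mycolon y)=cA(v,G\mycolon y)$, so every lift occurring in $\hat{W}_v^d$ lies in $(G\mycolon y)_v^c$; conversely, if $\hat{W}_v^d$ meets $(G\mycolon y)_v^c$ then some $d\omega^k\in cA(v,G\mycolon y)$, whence $d\langle\omega\rangle=(d\omega^k)\langle\omega\rangle\subseteq cA(v,G\mycolon y)$ and the whole set lies in $(G\mycolon y)_v^c$. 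Thus the sets of consecutive lifts of $W$ contained in $(G\mycolon y)_v^c$ are precisely the $\hat{W}_v^d$ with $d\in cA(v,G\mycolon y)$, and $\phi_{\hat W}$ is a bijection from $\{\,d\langle\omega\rangle:\ d\in cA(v,G\mycolon y)\,\}$ onto this set; that this domain genuinely partitions $cA(v,G\mycolon y)$ into $\langle\omega\rangle$-cosets is Lemma~\ref{lemma:CosetRedux}. I do not expect a serious obstacle, since the argument runs parallel to the proofs of Lemmas~\ref{lemma:ConnectedComponentsB}--\ref{lemma:ConnectedComponentsC}; the one point that needs care is the left-versus-right coset bookkeeping — net voltages compose on the right along a walk, so $\hat{W}_v^d$ is indexed by $d\langle\omega\rangle$ as a \emph{left} coset, and one must check that the entire orbit, not just $W_v^d$, stays inside a single component of $(G\mycolon y)^\alpha$, which is exactly where $\langle\omega\rangle\le A(v,G\mycolon y)$ is invoked.
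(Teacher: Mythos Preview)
Your argument is correct and follows essentially the same route as the paper: both identify $\hat{W}_v^d=\hat{W}_v^{d'}$ with equality of the left cosets $d\langle\omega(W)\rangle=d'\langle\omega(W)\rangle$, and both locate which lifts sit in $(G\mycolon y)_v^c$ by applying Theorem~\ref{theorem:GrossAlpert} with $G\mycolon y$ and $A(v,G\mycolon y)$ in place of $G$ and $A(v)$. The only cosmetic difference is that the paper first normalizes to $c=1_A$ via the transitive $A$-action and Lemma~\ref{lemma:CosetRedux}, whereas you work directly with a general $c$; your extra care in checking that an entire set of consecutive lifts stays in one component (via $\langle\omega(W)\rangle\le A(v,G\mycolon y)$) makes explicit a point the paper leaves implicit.
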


\begin{proof} We treat the case for which  $c=1_A$.  By Lemma \ref{lemma:CosetRedux} and the transitive action of $A$ on the components of $G[y]^\alpha$, this case will suffice.	The map $\phi_{\hat{W}}$ is well defined and injective since $\hat{W}_v^{d_1} =\hat{W}_v^{d_2}$ if and only if $d_1=d_2x$ for some $x\in \langle \omega (W)\rangle$.  Surjectivity follows immediately from the definition of $\phi_{\hat{W}}$.

Note that $\hat{W}_v^{d_1}=\hat{W}_v^{d_2}$ are sets of consecutive lifts of $W$ in $G[y]_v^{1_A}$ if and only if $v^{d_1}$ and $v^{d_2}$ are contained in $G[y]_v^{1_A}$, which by letting $G[y]$ and $A(v,G[y])$ take the places of $G$ and $A(v)$ in Theorem \ref{theorem:GrossAlpert}, respectively, is true if and only if $d_1^{-1}d_2\in A(v,G[y])$.  The result follows.\end{proof}

The results of Theorem \ref{theorem:BigCosetTheorem} follow readily from Lagrange's theorem for cosets and the above lemmas, and the proofs are omitted. 

\begin{theorem}\label{theorem:BigCosetTheorem} Consider $\langle G\rightarrow S, \alpha \rightarrow A\rangle$, and let $I\in C_2(S)$ be such that $S[I]$ is connected.  Let $y\in C_1(S[I])$ be such that $G[y]$ is connected, and let $W$ denote a closed walk in $G[y]$ based at $v\in V(G[y])$.  
\begin{enumerate} 
\item{There are $\frac{|A|}{|A(v)|}$ components of $S^\alpha$.} \label{theorem:BigCosetTheoremPart1}
\item{There are $\frac{|A(v)|}{|A(v,S[I])|}$ components of $S[I]^\alpha$ contained in each component of $S^\alpha$.}\label{theorem:BigCosetTheoremPart2} 
\item{There are $\frac{|A(v,S[I])|}{|A(v,G[y])|}$ components of $G[y]^\alpha$ contained in each component of $S[I]^\alpha$.}\label{theorem:BigCosetTheoremPart3}
\item{There are $\frac{|A(v,G[y])|}{|\langle \omega(W)\rangle|}$ sets of consecutive lifts of $W$ in each component of $G[y]^\alpha$.}\label{theorem:BigCosetTheoremPart4}
\end{enumerate} \end{theorem}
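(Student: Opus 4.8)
The plan is to observe that all four statements are instances of the same bijection-plus-Lagrange argument, applied to the tower of subgroups
\[
\langle \omega(W)\rangle \;\le\; A(v,G\mycolon y) \;\le\; A(v,S\mycolon I) \;\le\; A(v) \;\le\; A,
\]
and that the hard work has already been done in Lemmas \ref{lemma:ConnectedComponentsSalpha}, \ref{lemma:ConnectedComponentsB}, \ref{lemma:ConnectedComponentsC}, and \ref{lemma:ConnectedComponentsD}. So the proof is essentially bookkeeping: match each part of the theorem with the bijection that counts the relevant objects, then invoke Lagrange's theorem to evaluate the number of cosets as an index.

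First I would dispatch part \ref{theorem:BigCosetTheoremPart1}: by Lemma \ref{lemma:ConnectedComponentsSalpha} the components $S_v^a$ of $S^\alpha$ are in bijection with the left cosets $aA(v)$ in $A$, and there are $[A:A(v)] = |A|/|A(v)|$ of these by Lagrange's theorem. For part \ref{theorem:BigCosetTheoremPart2}, fix a component $S_v^a$ of $S^\alpha$, equivalently (by part \ref{theorem:BigCosetTheoremPart1}) a coset $aA(v)$; Lemma \ref{lemma:ConnectedComponentsB} gives a bijection between the left cosets $bA(v,S\mycolon I)$ with $b\in aA(v)$ and the components $(S\mycolon I)_v^b$ lying in $S_v^a$. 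By Lemma \ref{lemma:CosetRedux} the number of such cosets is the index of $A(v,S\mycolon I)$ in $A(v)$, namely $|A(v)|/|A(v,S\mycolon I)|$, and this count is independent of $a$ since the components of $S^\alpha$ are homeomorphic and carry the same induced voltage structure (or simply: all indices $[A(v):A(v,S\mycolon I)]$ are equal regardless of which coset $aA(v)$ we partition). Parts \ref{theorem:BigCosetTheoremPart3} and \ref{theorem:BigCosetTheoremPart4} are handled identically, using Lemma \ref{lemma:ConnectedComponentsC} (with the index $[A(v,S\mycolon I):A(v,G\mycolon y)]$) and Lemma \ref{lemma:ConnectedComponentsD} (with the index $[A(v,G\mycolon y):\langle\omega(W)\rangle]$, noting the image of $\phi_{\hat W}$ is exactly the set of consecutive lifts of $W$ covering the edges of the relevant component of $(G\mycolon y)^\alpha$).

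The only genuine subtlety — and the step I would be most careful about — is the claim that each count is the \emph{same} for every choice of ambient component, so that phrases like ``contained in each component'' are justified. For parts \ref{theorem:BigCosetTheoremPart1} and \ref{theorem:BigCosetTheoremPart4} nothing needs to be said. For parts \ref{theorem:BigCosetTheoremPart2} and \ref{theorem:BigCosetTheoremPart3} the cleanest argument is purely group-theoretic: in a tower $K\le H\le G$ of finite groups, every left coset $gH$ of $H$ in $G$ decomposes into exactly $[H:K]$ left cosets of $K$, a number that does not depend on $g$ (this is the content of Lemma \ref{lemma:CosetRedux} together with Lagrange). Since the relevant bijections of Lemmas \ref{lemma:ConnectedComponentsB} and \ref{lemma:ConnectedComponentsC} are coset-by-coset, transporting this uniformity through $\phi_I$ and $\phi_y$ gives the uniformity of the topological counts. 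I would also remark, as a consistency check, that multiplying the four ratios telescopes to $|A|/|\langle\omega(W)\rangle|$, which is the total number of consecutive-lift sets of $W$ in $G^\alpha$ — exactly as it should be, since $|A|/|\langle\omega(W)\rangle|$ copies of $\langle\omega(W)\rangle$ tile $A$ and each governs one set of consecutive lifts.
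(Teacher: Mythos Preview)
Your approach is exactly the paper's: the text preceding the theorem says the result ``follows readily from Lagrange's theorem for cosets and the above lemmas,'' and the proof is omitted; you have simply written out that omitted bookkeeping using Lemmas \ref{lemma:ConnectedComponentsSalpha}--\ref{lemma:ConnectedComponentsD} together with Lagrange, which is precisely what was intended.

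One point worth flagging: your derivation for Part~\ref{theorem:BigCosetTheoremPart2} yields $|A(v)|/|A(v,S\mycolon I)|$ components of $(S\mycolon I)^\alpha$ in each component of $S^\alpha$, whereas the theorem as stated has $|A|/|A(v,S\mycolon I)|$. Your count is the one that actually follows from Lemma~\ref{lemma:ConnectedComponentsB} and Lagrange (cosets of $A(v,S\mycolon I)$ inside a single coset $aA(v)$), so the numerator $|A|$ in the statement appears to be a typo for $|A(v)|$; in every example in Section~\ref{section:examples} one has $A(v)=A$, so the discrepancy is invisible there. You might note this in your write-up rather than silently proving a different formula.
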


Construction \ref{construction:SeparatingZGraph} allows us to algebraically describe a decomposition of any component $S_v^a$ of $S^\alpha$ in terms of the z-regions associated to the cycles forming the fiber over a separating cycle $G[z]$.  Since separating cycles are orientation-preserving cycles, each cycle in the fiber over $G[z]$ is also orientation preserving, and so the 1-chains forming the fiber over $z$ are a set of 1-chains with property $\Delta$.

\begin{construction}\label{construction:SeparatingZGraph} Let $z=\partial I$ for some $I\in C_2(S)$ be such that $G[z]$ is a cycle.  By Lemma \ref{lemma:ComplexComplement}, $z=\partial I^c$.  Let $W$ be an Eulerian walk of $G[z]$ based at $v\in V(G[z])$.   Let $c_1,\ \ldots\, c_j,\ j= \frac{|A(v)|}{|\langle \omega(W)\rangle|}$ denote representatives of the left cosets of $\langle \omega(W)\rangle$ contained in $aA(v)$.  By Lemmas \ref{lemma:ConnectedComponentsB} and \ref{lemma:ConnectedComponentsC}, we may conclude that for the 1-chains $z_v^{c_1},\ldots,z_v^{c_j}$ contained in $S_v^a$, the vertices of the corresponding z-graph $\Gamma(z_v^{c_1},\ldots,z_v^{c_j})$ correspond bijectively to the left cosets contained in the union 
\[ \left \{bA(v,S[I]):\ b\in aA(v)\right \rbrace \bigcup \left \{bA(v,S[I^c]):\ b\in aA(v)\right \rbrace.\]   Similarly, the left cosets of $\langle \omega(W)\rangle$ contained in $aA(v)$ correspond bijectively to the 1-chains $z_v^c$ contained in $S_v^a$, which are the edges of $\Gamma(z_v^{c_1},\ldots,z_v^{c_j})$.  Thus, implicitly using the aforementioned bijections, we may say that the edges of $\Gamma(z_v^{c_1},\ldots,z_v^{c_j})$ are incident to the vertices of $\Gamma(z_v^{c_1},\ldots,z_v^{c_j})$ that contain them.\end{construction}

\begin{remark}\label{remark:BipartiteCosetGraph} The graph  $\Gamma(z_v^{c_1},\ldots,z_v^{c_j})$ constructed in Construction \ref{construction:SeparatingZGraph} is bipartite.  The bipartition of the vertex set is into two sets $S_1$ and $S_2$, whose elements correspond to the elements of $\left \{bA(v,S[I]):\ b\in a(v)\right \rbrace$ and $\left \{bA(v,S[I^c]):\ b\in aA(v)\right \rbrace$, respectively.\end{remark}

\subsection{Cosets requiring Archdeacon's medial-graph enhancement}\label{subsection:ArchdeaconCosets}

Recall from Section \ref{subsubsection:ArchDeacon} the subdivided medial graph $M'$, the total graph $T(G)$ associated to a cellular graph embedding, and the extended voltage assignment to $T(G)$.  Since $(M')^\alpha$ captures the incidence of faces and edges of $G^\alpha\rightarrow S^\alpha$, we introduce Definition \ref{definition:SpecialClaw} as a bookkeeping tool for computation that will allow for easier proofs of results and constructions of more z-graphs.

\begin{definition}\label{definition:SpecialClaw} Consider $\langle G\rightarrow S, \alpha \rightarrow A\rangle$, two adjacent vertices $u$ and $v$ of $G$ that are joined by an edge $e$, and a choice of preferred direction of $e$.  Let $v_e$ denote the vertex of the subdivided medial graph corresponding to $e$, and consider Figure \ref{figure:SpecialClaw}.  The subgraph $T(G)[\left \{(v,v_e),(w,v_e),(v_e,y)\right \rbrace]$ is called the \textit{special claw corresponding to $e$}.\end{definition}

\begin{figure}[H]
\begin{center}
\includegraphics[scale=.55]{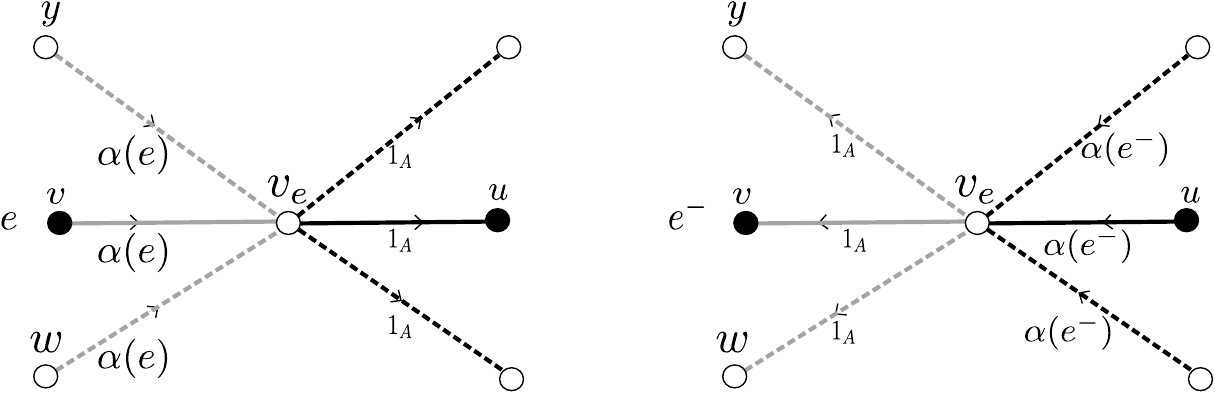}
\caption{The edges of the special claw corresponding to $e$ appear in gray on the left hand side of both graphics, each corresponding to a choice of preferred direction on $e$.  The subdivided medial graph $M'$ has white vertices and dashed edges.}\label{figure:SpecialClaw}
\end{center}
\end{figure}

\begin{remark}\label{remark:VoltageClawSuperscripts}No matter the choice of preferred direction of $e$, any walk joining any two of the vertices $w$, $v$, and $y$ in the special claw corresponding to $e$ has net voltage $1_A$.  Therefore, the vertices in each component in the fiber over the special claw corresponding to $e$ will have vertices $v^a$, $w^a$, $y^a$ with identical group element superscripts.\end{remark}

Let $A'(w')$ denote the group of net voltages of closed walks in $M'$ based at $w'\in V(M')$.  Lemma \ref{lemma:MedialGroupsEqual} is immediate in light of the projection map $\phi: M'\rightarrow G'$.

\begin{lemma}\label{lemma:MedialGroupsEqual}For a fixed vertex $v$ of $G$, an edge $e$ incident to $v$, the special claw corresponding to $e$, and a vertex $w'$ of $M'$ that takes the place of either of the vertices $w$ or $y$ in Definition \ref{definition:SpecialClaw}, $A'(w')=A(v)$.\end{lemma}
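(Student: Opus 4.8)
The plan is to use the projection map $\psi\colon T(G)\to T(G)$ from Section~\ref{subsubsection:ArchDeacon}: its restriction to $M'$ is a graph homomorphism $M'\to G'$ that is the identity on each vertex $v_e$, that satisfies $\alpha_E(d)=\alpha(\psi(d))$ on every dart, and that — as Figure~\ref{fig:PsiMap} makes explicit — sends a corner vertex of $M'$ lying at a vertex $x$ of $G$ to $x$ itself; in particular $\psi(w')=v$. I would also record at the outset that, since subdividing a voltage assignment with the new half carrying voltage $1_A$ leaves the net voltage of every closed walk based at an original vertex unchanged, the group of net voltages of closed walks of $G'$ based at $v$ is exactly $A(v)$. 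The argument then has two halves.

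The inclusion $A'(w')\le A(v)$ is the ``immediate'' one: a closed walk $\widetilde W$ of $M'$ based at $w'$ is mapped by $\psi$ to a closed walk of $G'$ based at $\psi(w')=v$, and $\omega_{\alpha_E}(\widetilde W)=\omega_\alpha(\psi(\widetilde W))$ because $\psi$ preserves voltages dart by dart, so $\omega_{\alpha_E}(\widetilde W)\in A(v)$.

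For the reverse inclusion $A(v)\le A'(w')$, I would lift an arbitrary closed walk $W$ of $G'$ based at $v$ to a closed walk $\widetilde W$ of $M'$ based at $w'$ with the same net voltage. The structural fact that makes this work is that the boundary cycle $\gamma_x$ in $M'$ of the face of $M'$ corresponding to a vertex $x$ of $G$ — and hence every subwalk of $\gamma_x$ joining two corner vertices — is carried by $\psi$ to a \emph{null-homotopic} closed walk of $G'$ based at $x$; this is a short reduction-of-backtracks computation using the transferred voltages of Construction~\ref{construction:VoltTransferred}, and it forces such subwalks of $\gamma_x$ to have net voltage $1_A$. Lifting $W$ one dart at a time: whenever we sit over a midpoint $v_e$ we lift the next dart (a half of $e$) along a medial half-edge at $v_e$ ending at a corner containing $e$; whenever we sit over an original vertex $x$ we first slide along $\gamma_x$ to a corner of $x$ containing the edge we are about to cross — a detour of net voltage $1_A$ — and then lift. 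A final slide along $\gamma_v$ returns us to $w'$. The resulting $\widetilde W$ is closed at $w'$ and its $\psi$-image differs from $W$ only by insertions of null-homotopic loops, so $\omega_{\alpha_E}(\widetilde W)=\omega_\alpha(W)$, giving $\omega_\alpha(W)\in A'(w')$; as $W$ varies this yields $A(v)\le A'(w')$, and the two halves together give $A'(w')=A(v)$.

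The step I expect to be the main obstacle is precisely this second inclusion — concretely, checking that one can always move within $M'$ between corner vertices without changing net voltages, which reduces to the triviality of the net voltage of the medial face cycles $\gamma_x$. A quicker but less self-contained alternative I would mention for the second inclusion: by the component count of Theorem~\ref{theorem:BigCosetTheorem}(\ref{theorem:BigCosetTheoremPart1}) applied to $M'$ and to $G'$, the derived graphs $(M')^{\alpha_E}$ and $(G')^{\alpha'}\cong G^{\alpha}$ (Lemma~\ref{lemma:ArchdeaconSubdivision}) have $|A|/|A'(w')|$ and $|A|/|A(v)|$ components respectively; by Theorem~\ref{theorem:DerivedMedial} the former is the medial graph of the latter, hence has the same number of components, so $|A'(w')|=|A(v)|$, which with $A'(w')\le A(v)$ again forces $A'(w')=A(v)$.
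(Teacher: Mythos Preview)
Your argument is correct and follows exactly the route the paper indicates: the paper omits the proof, noting only that the lemma is ``immediate in light of the projection map'' $\psi\colon M'\to G'$, and your two inclusions---forward via $\psi$, reverse via lifting walks along medial face boundaries of net voltage $1_A$---are precisely the details one fills in. Your counting alternative via Theorem~\ref{theorem:DerivedMedial} and Theorem~\ref{theorem:BigCosetTheorem}(\ref{theorem:BigCosetTheoremPart1}) is also valid and arguably closer to the paper's sense of ``immediate.''
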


Consider $G\rightarrow S$ and $y\in C_1(G)$ such that $G[y]$ is connected.  We introduce Definition \ref{definition:TransverseCrossing} for the sake of presenting it in the most general context, and alert the reader to the fact that it will be used in distinct settings in Sections \ref{subsubsection:OrientationReversingCycle} and \ref{subsubsection:OrientationPreservingCycle}.

\begin{definition}\label{definition:TransverseCrossing} Let $W'$ be a walk in $M'$.  We say that $W'$ transversely crosses $G[y]$ if  $W'$ contains a subwalk $d_id_{i+1}$ such that $d_i$ intersects one component of $U^*(h(d_i))\setminus G[y]$ and $d_{i+1}$ intersects the other.\end{definition}

Fix $w'\in V(M')$.  Let $A^\veebar(w',G[y])$ denote the subgroup of $A'(w')$ of walks in $M'$ that do not transversely cross $G[y]$.  For a vertex $w'\in V(M')$ and $z\in Z(G)$ inducing a cycle such that the cycles of $G[z]^\alpha$ are orientation-preserving cycles, we let $\bar{S}_{w'}^b(z)$ denote the z-region of $S^\alpha$ with respect to the fiber over $z$ that contains $w^{'b}$.

\subsubsection{Cycles over orientation-reversing cycles}\label{subsubsection:OrientationReversingCycle}

Consider $\langle G\rightarrow S, \alpha \rightarrow A\rangle$ for a nonorientable $S$ and $z\in Z(G)$ such that $G[z]$ is an orientation-reversing cycle.  Let $W=d_1d_2\ldots d_k$ denote an Eulerian walk of $G[z]$ based at $v\in V(G[z])$.  For the remainder of Section \ref{subsubsection:OrientationReversingCycle}, we let $\omega$ denote $\omega(W)$ and $|\omega|$ denote $|\langle \omega (W)\rangle |$.  We also assume that $|\omega|$ is even so that the cycles forming the fiber over $G[z]$ are orientation preserving, per Lemma \ref{lemma:OrientationReversingVoltage}.

\begin{lemma}\label{lemma:OrientationReversingVoltage} Each cycle of $G[z]^\alpha$ is orientation preserving if and only if $|\omega|$ is even.\end{lemma}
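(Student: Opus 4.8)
The plan is to show that each component of $(G\mycolon z)^\alpha$ is a circle traced exactly once by a single set of consecutive lifts of an Eulerian walk of $G\mycolon z$, and then to count the orientation-reversing edges on such a circle.

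First I would pin down the relevant local voltage group. Because $G\mycolon z$ is a circle, every closed walk in $G\mycolon z$ based at $v$ has net voltage a power of $\omega$: any backtracking cancels, and what remains is a power of the Eulerian walk $W$ (or of $W^{-1}$). Hence $A(v,G\mycolon z)=\langle\omega\rangle$, and Theorem~\ref{theorem:BigCosetTheorem}, part~\ref{theorem:BigCosetTheoremPart4} (applied with $y=z$) tells us that exactly $|A(v,G\mycolon z)|/|\langle\omega\rangle|=1$ set of consecutive lifts of $W$ covers the edges of each component of $(G\mycolon z)^\alpha$. Since each vertex of $(G\mycolon z)^\alpha$ has degree $2$, every such component is a circle, and it is traced exactly once by the closed walk obtained by concatenating the $|\omega|$ lifts $W_v^{a},W_v^{a\omega},\dots,W_v^{a\omega^{|\omega|-1}}$ making up $\hat{W}_v^{a}$; in particular, if $G\mycolon z$ has $k$ edges, this component has $k|\omega|$ edges. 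Every component of $(G\mycolon z)^\alpha$ arises this way, so it suffices to analyze one of them.

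Next I would use that orientation type is inherited by lifts: in the derived embedding $G^\alpha\to S^\alpha$, the edge $e^{a}$ is orientation-reversing exactly when $e$ is orientation-reversing in $G\to S$ --- this is how the signature of the derived rotation system is defined from that of the base in \cite{GT}, and it is also visible in Garman's description, where the local configuration at $e^{a}$ is a copy of that at $e$. Let $r$ be the number of orientation-reversing edges of $G\mycolon z$; by hypothesis $G\mycolon z$ is an orientation-reversing circle, so $r$ is odd. Each lift $W_v^{a\omega^{j}}$ projects bijectively onto the edge set of $W$, hence traverses exactly $r$ orientation-reversing edges of $G^\alpha$; moreover the $|\omega|$ lifts traverse pairwise disjoint edge sets, since together they form the simple circle just described. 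Therefore that circle has exactly $r|\omega|$ orientation-reversing edges among its $k|\omega|$ edges, so it is orientation-preserving if and only if $r|\omega|$ is even, and since $r$ is odd this holds if and only if $|\omega|$ is even.

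The one step with genuine topological content --- and the place I expect the argument to need the most care --- is the claim that the orientation type of an edge is preserved under lifting, since everything else is the purely group-theoretic bookkeeping of Section~\ref{subsection:NonArchdeacon}. A clean alternative that packages this step differently is to work with ribbon neighborhoods: since $G\mycolon z$ is orientation-reversing, $R(z)$ is a M\"obius band with core $G\mycolon z$; the component of $p^{-1}(R(z))$ meeting a fixed lift circle $C'$ is a connected covering space of this M\"obius band of degree $|\omega|$ that deformation-retracts onto $C'$; and a connected covering of a M\"obius band is a M\"obius band when its degree is odd and an annulus when its degree is even. Thus $C'$ has a M\"obius ribbon neighborhood, i.e.\ is orientation-reversing, precisely when $|\omega|$ is odd, which again gives the claim.
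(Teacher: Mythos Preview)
Your argument is correct and follows exactly the paper's approach: the paper's proof is simply a two-line version of your main argument, invoking the same fact that each component of $(G\mycolon z)^\alpha$ is single-covered by $|\omega|$ consecutive lifts of $W$ and then (implicitly) counting orientation-reversing edges. Your write-up fills in the steps the paper leaves to the reader---notably the preservation of edge orientation type under lifting---and your alternative M\"obius-band covering argument is a pleasant extra, but the core route is the same.
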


\begin{proof} A cycle is orientation reversing if and only if it has an odd number of orientation-reversing edges.  Following Part \ref{theorem:BigCosetTheoremPart4} of Theorem \ref{theorem:BigCosetTheorem}, there are $|\omega|$ consecutive lifts of $W$ that single cover the edges of each component of $G[z]^\alpha$.  The result follows.\end{proof}

Consider a vertex-star neighborhood $U^*(v)$ of $v$, and consider an orientation of $U^*(v)$ induced by the rotation $\rho$ on $v$.  Consider also the two components of $U^*(v)\setminus G[z]$.  It is thus reasonable to denote one component of $U^*(v)\setminus G[z]$ the \textit{clockwise side of $v$} and the other open set the \textit{counterclockwise side of $v$} according to the orientation on $U^*(v)$ induced by $\rho$ and the direction of the darts $d_1$ and $d_k$, as in Figure \ref{figure:EastWestSides}.

\begin{figure}[H]
\begin{center}
\includegraphics[scale=.45]{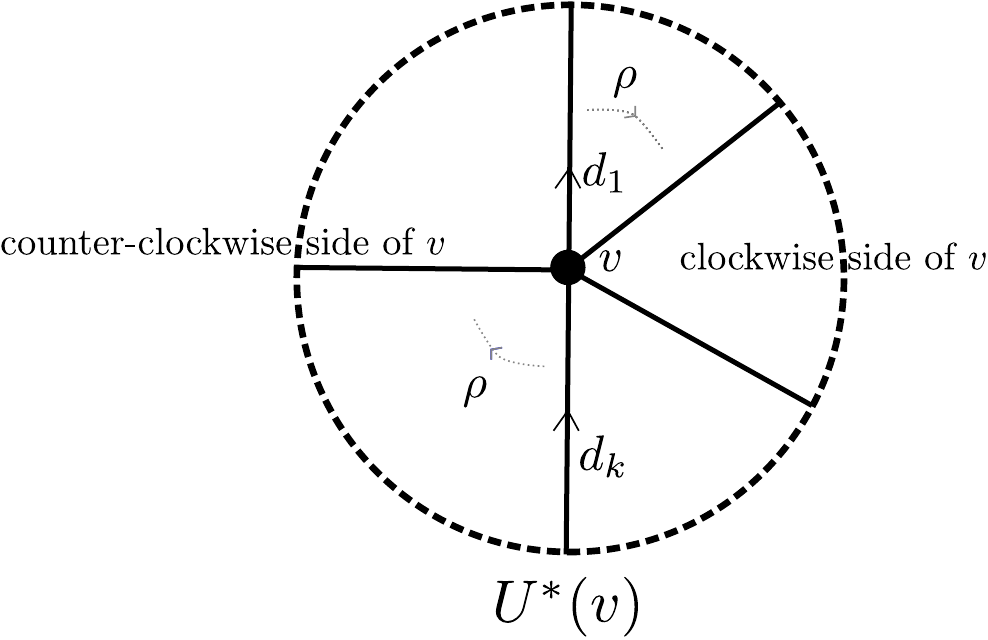}
\caption{The clockwise and counterclockwise sides of $v$.}\label{figure:EastWestSides}
\end{center}
\end{figure}

Recall the covering map $p\colon S^\alpha \rightarrow S$ and let $U^a(v)$ denote the component of $p^{-1}(U^*(v))$ containing $v^a$.  We lift the distinction of clockwise and counterclockwise sets to the elements of $\left \{U^a:\ a\in A\right \rbrace$; let the \textit{clockwise side of $v^a$} refer to the component of $U^a(v)\setminus G[z]_v^a$ that $p$ maps to the clockwise side of $v$ and the \textit{counterclockwise side of $v^a$} refer to the other component.

We now consider an extended voltage assignment to the total graph $T(G)$ associated to $G\rightarrow S$.  For clarity's sake we require that $U^*(v)$ contains the vertices $v,\ x,\ w,$ and $y$ of the special claw corresponding to $E(d_1)$.  Assume without loss of generality that the clockwise side of $v$ contains $w$ and the counterclockwise side of $v$ contains $y$.

Consider a ribbon neighborhood $R(z_v^{1_A})$ of $G[z]_v^{1_A}$ containing $U^{1_A}(v)$.  Let the \textit{east side} of $z_v^{1_A}$ refer to the component of $R(z_v^{1_A})\setminus G[z]_v^{1_A}$ containing the clockwise side of $v^{1_A}$, and the \textit{west side} of $z_v^{1_A}$ refer to the other component; the east side of $z_v^{1_A}$ contains $w^{1_A}$ and the west side  contains $y^{1_A}$.  Since each lift of $W$ single covers an odd number of orientation-reversing edges of $G^\alpha$, we see that the east side of $z_v^{1_A}$ contains the counterclockwise side of $v^\omega$.  Continuing in this way, we see that for $l$ odd, the east side of $z_v^{1_A}$ contains the counterclockwise side of $v^{\omega^l}\!$, and the west side of $z_v^{1_A}$ contains the clockwise side of $v^{\omega^l}$.  Moreover, for $l$ even, we see that the east side of $z_v^{1_A}$ contains the clockwise side of $v^{\omega^l}$ and the counterclockwise side of $v^{\omega^l}$.  Figure \ref{figure:OneCycleTrivialExample} illuminates this discussion.  Observe the alternation of the rotations of the vertices in the fiber over $v$ following each lift of $W$.  For the sake of a more intelligible picture, we show neither the total graph embedding nor the derived total graph embedding.

\begin{figure}[H]
\begin{center}
\includegraphics[scale=.5]{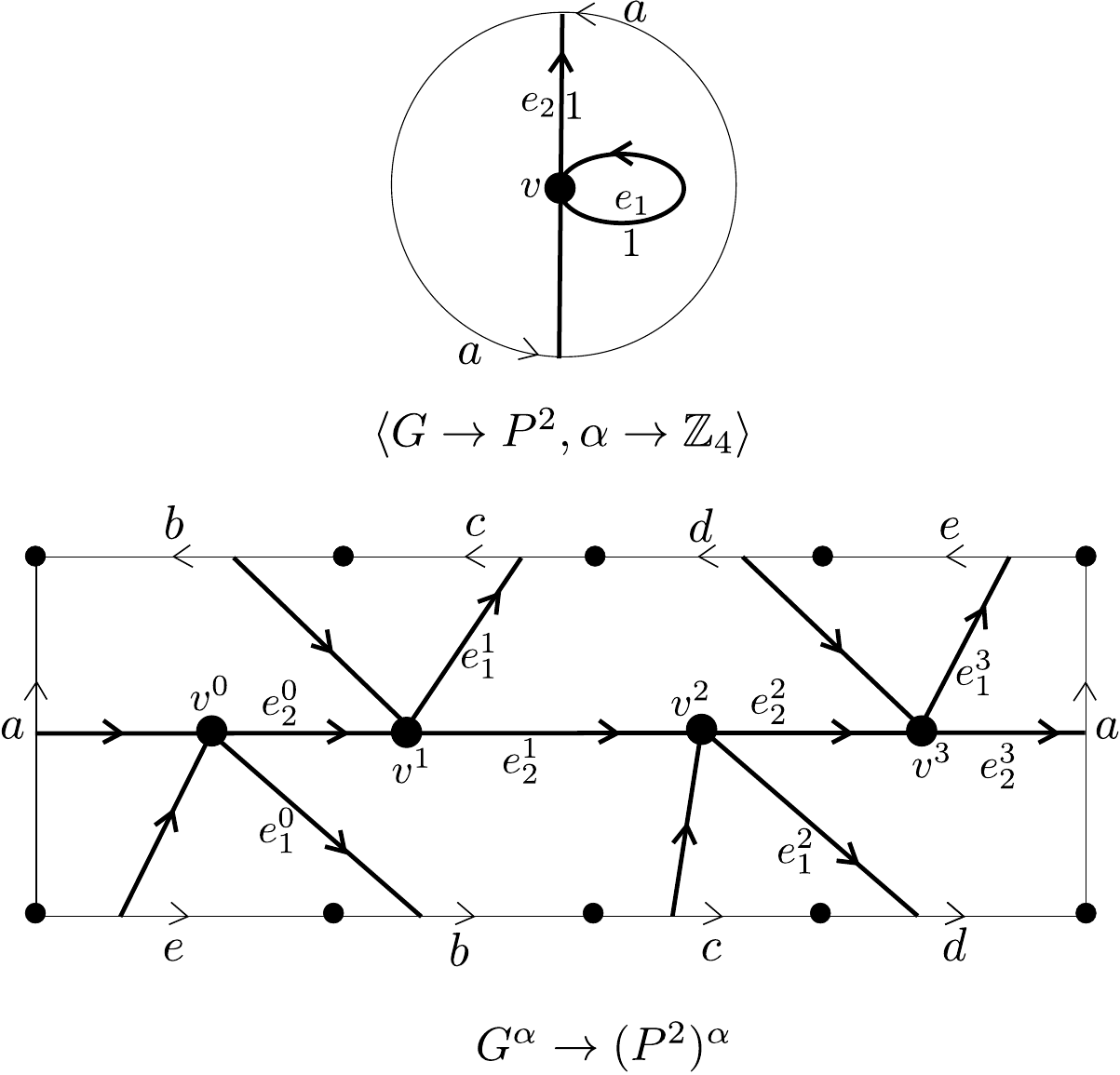}
\caption{A nonorientable ordinary voltage graph embedding in the projective plane $P^2$ and its derived embedding.  Local sign switches have been applied to the vertices $v^1$ and $v^3$.} \label{figure:OneCycleTrivialExample} 
\end{center}
\end{figure}

\begin{theorem}\label{theorem:BigOrientationReversingCycleTheorem}Consider $\langle G\rightarrow S,\alpha \rightarrow A\rangle$ for a nonorientable $S$ and let $z\in Z(G)$ induce an orientation-reversing cycle such that the Eulerian walk $W$ of $G[z]$ has the property that $|\omega|$ is an even integer.\begin{enumerate} 
\item{Each z-region of $S^\alpha$ with respect to $\left \{ z_v^a:\ a\in A\right \rbrace$ contains the same nonzero number of vertices in the fiber over $w$.}\label{theorem:BigOrientationReversingCycleTheoremPart1}

\item{For each $a\in A$, the component $S_v^a$ of $S^\alpha$ contains either one or two z-regions with respect to the fiber over $z$.}\label{theorem:BigOrientationReversingCycleTheoremPart2} 

\item{If $\langle \omega \rangle \neq A(v)$, then each component of $G[z]^\alpha$ is nonseparating.}\label{theorem:BigOrientationReversingCycleTheoremPart3}\end{enumerate}\end{theorem}

\proof By the discussion preceding the statement of Theorem \ref{theorem:BigOrientationReversingCycleTheorem}, we may conclude that both the east and west sides of $z_v^{1_a}$ contain the same nonzero number of lifts of the clockwise and counterclockwise sides of $v$.  Since $A$ is transitive on the fiber over $G[z]$, it follows that each of the components of $R(z_v^a)\setminus G[z]_v^a$ intersects the same number of components of the lifts the clockwise and counterclockwise sides of $v$.  Part \ref{theorem:BigOrientationReversingCycleTheoremPart1} follows.

To prove Part \ref{theorem:BigOrientationReversingCycleTheoremPart2}, first note that the action of $\omega$ on $S_v^a$ can be seen to be a glide reflection on the components of $G[z]^\alpha$.  If there exist two z-regions of $S^\alpha$ relative to the 1-chains in the fiber over $z$ that share a common boundary component, then the action of $\omega$ on $S^\alpha$ switches those z-regions, and so we see that any two of these z-regions are homeomorphic.  If there are two or more components of $G[z]^\alpha$ bounding a z-region $F$, then $F$ cannot be bounded by two or more distinct z-regions since $F$ cannot be switched with two or more distinct z-regions simultaneously.  Part \ref{theorem:BigOrientationReversingCycleTheoremPart2} follows.

We now move to proving Part \ref{theorem:BigOrientationReversingCycleTheoremPart3}.  Let $I = \sum_{f\in C_2(S)} f$.  Since $G[z]$ is a cycle, it follows that $\langle \omega \rangle= A(v,G[z])$, and so, by Lemma \ref{lemma:ConnectedComponentsC}, we have that $\left \{z_v^a:\ a\in A(v)\right \rbrace$ is the set of 1-chains inducing the cycles in $G[z]^\alpha$ that are contained in $S_v^{1_A}$.  If there is only one z-region of $S_v^{1_A}$ with respect to $\left \{z_v^a:\ a\in A(v)\right \rbrace$, then it is apparent that no single $G[z]_v^a$ is separating.  Now, assume that there are two z-regions of $S_v^{1_A}$ with respect to $\left \{z_v^a:\ a\in A(v)\right \rbrace$.  If $A(v)\neq \langle \omega \rangle$, then $\frac{|A(v,S[I])|}{| \omega |} >1$ and so, by Part \ref{theorem:BigCosetTheoremPart3} of Theorem \ref{theorem:BigCosetTheorem}, there is more than one component of $G[z]^\alpha$ bounding both z-regions of $S_v^{1_A}$ with respect to $\left \{z_v^a:\ a\in A(v)\right \rbrace$.  Part \ref{theorem:BigOrientationReversingCycleTheoremPart3} follows by transitivity of the action of $A$ on the components of $S^\alpha$.\hfill $\blacksquare$

\subsubsection{Cycles over nonseparating orientation-preserving cycles}\label{subsubsection:OrientationPreservingCycle}

Here $S$ can be an orientable or nonorientable surface.  Consider $\langle G\rightarrow S,\alpha \rightarrow A\rangle$, and let $z\in Z(G)$ induce a nonseparating orientation-preserving cycle.  Let $W$, $\omega$, $|\omega|$ and $v$ denote the same things they did in Section \ref{subsubsection:OrientationReversingCycle}, but here, $|\omega|$ is allowed to be any positive integer.  Consider the extended voltage assignment to the total graph $T(G)$, the special claw corresponding to $E(d_1)$, and let $w'$ and $y'$ take the place of the vertices $w$ and $y$ appearing in Definition \ref{definition:SpecialClaw}.

\begin{theorem}\label{theorem:OrientationPreservingNonSeparating}  Fix $a\in A$ and consider $S_v^a$.  \begin{enumerate}\item{Each of the z-regions of $S_v^a$ with respect to $\left \{z_v^b:\ b\in aA(v)\right \rbrace$ contains a vertex in the fiber over $w'$ and a vertex in the fiber over $y'$.}\label{theorem:OrientationPreservingNonSeparatingPartOne}\item{If there is more than one z-region of $S_v^a$ with respect to $\left \{z_v^b:\ b\in aA(v)\right \rbrace$, then each z-region is bounded by an even number of cycles of $G[z]^\alpha$.}\label{theorem:OrientationPreservingNonSeparatingPartTwo}\end{enumerate}\end{theorem}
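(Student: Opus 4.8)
The plan is to use Archdeacon's medial-graph enhancement to transfer the hypothesis that $G\mycolon z$ is nonseparating in $S$ up to the derived surface, phrased through the transverse-crossing machinery of Definitions \ref{definition:TransverseCrossing} and \ref{definition:TransverseCrossingGroup}. First I would fix a walk $W'$ in $M'$ from $w'$ to $y'$ that does not transversely cross $G\mycolon z$; such a $W'$ exists precisely because $G\mycolon z$ is nonseparating, since $w'$ and $y'$ lie in the two corners at $v$ adjacent to $E(d_1)\subseteq G\mycolon z$ (Definition \ref{definition:SpecialClaw}), hence on opposite sides of $G\mycolon z$ near $v$ but in the same component of $S\setminus G\mycolon z$, and a path joining them there can be realized as such a walk. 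Let $g$ denote the net voltage of $W'$; prepending and appending the trivial-voltage paths of the special claw joining $v$ to $w'$ and $y'$ to $v$ (Remark \ref{remark:VoltageClawSuperscripts}) exhibits $g$ as the net voltage of a closed walk at $v$, so $g\in A(v)$. The \emph{central observation} is that, since $W'$ does not transversely cross $G\mycolon z$, no lift of $W'$ crosses any circle of $(G\mycolon z)^\alpha$ (transverse crossing is a local condition at the head of a dart, and the covering map is a local homeomorphism there); as the lift of $W'$ beginning at $w'^b$ ends at $y'^{bg}$, this forces $\bar S^b_{w'}(z)=\bar S^{bg}_{y'}(z)$ for every $b\in A$, and reversing $W'$ gives the symmetric pairing of $y'^c$ with $w'^{cg^{-1}}$. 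Since $g\in A(v)$, when $b\in aA(v)$ both points lie in $S_v^a$, hence in a common z-region of $S_v^a$.

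Next I would record the local structure along a lifted circle, exactly as in the discussion preceding Theorem \ref{theorem:BigOrientationReversingCircleTheorem} but now in the orientation-preserving case, where ``left versus right of the direction of traversal'' stays globally consistent along the circle rather than switching. Each component $C$ of $(G\mycolon z)^\alpha$ is a single orientation-preserving circle with an annular neighborhood; since $A(v,G\mycolon z)=\langle\omega\rangle$, part \ref{theorem:BigCosetTheoremPart4} of Theorem \ref{theorem:BigCosetTheorem} shows that exactly one set of consecutive lifts of $W$ covers the edges of $C$, so $C$ meets the fiber over $v$ in exactly $|\omega|$ vertices, and near each of these sit a lift of $w'$ and a lift of $y'$ on opposite sides of $C$. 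By orientation-preservingness, the $|\omega|$ lifts of $w'$ along $C$ all lie on one side of $C$ and the $|\omega|$ lifts of $y'$ all lie on the other. For Part \ref{theorem:OrientationPreservingNonSeparatingPartOne}: a z-region $F$ of $S_v^a$ is bounded by at least one such $C$, hence contains a lift of $w'$ or a lift of $y'$, and the central observation together with its reverse then forces $F$ to contain a vertex in the fiber over $w'$ and a vertex in the fiber over $y'$.

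For Part \ref{theorem:OrientationPreservingNonSeparatingPartTwo}: fix a z-region $F$ of $S_v^a$, and let $p_F$ and $q_F$ be, respectively, the number of circles of $(G\mycolon z)^\alpha$ that bound $F$ on their $w'$-side and on their $y'$-side. By the vertex count just described, $F$ contains exactly $|\omega|\,p_F$ vertices in the fiber over $w'$ and exactly $|\omega|\,q_F$ in the fiber over $y'$. But the central observation makes $w'^b\mapsto y'^{bg}$ a bijection from the fiber-over-$w'$ vertices of $F$ to the fiber-over-$y'$ vertices of $F$, with inverse $y'^c\mapsto w'^{cg^{-1}}$; hence $p_F=q_F$. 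Therefore the number of circle-sides incident to $F$, i.e.\ the degree of $F$ in the z-graph of $S_v^a$ with respect to $\{z_v^b:b\in aA(v)\}$, equals $p_F+q_F=2p_F$, so it is even; in particular, once $S_v^a$ contains more than one z-region, $F$ is bounded by an even number of circles of $(G\mycolon z)^\alpha$, as claimed.

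The hard part will be the bookkeeping concealed in the central observation: verifying carefully that a walk in $M'$ from $w'$ to $y'$ avoiding transverse crossings of $G\mycolon z$ exists exactly when $G\mycolon z$ is nonseparating, and that this avoidance is inherited by every lift, so that the lift is trapped inside a single z-region. A close second is confirming the orientation-preserving consistency of the $w'$- and $y'$-sides along each lifted circle, together with the identity $A(v,G\mycolon z)=\langle\omega\rangle$, which between them supply the ``exactly $|\omega|$ vertices per circle on each side'' count underlying both parts.
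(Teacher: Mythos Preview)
Your argument is correct and, for Part~\ref{theorem:OrientationPreservingNonSeparatingPartOne}, essentially identical to the paper's: both fix a $w'$--$y'$ walk $W'$ in $M'$ that avoids transverse crossings of $G\mycolon z$ (existence coming from nonseparability), observe that its lifts avoid all circles of $(G\mycolon z)^\alpha$, and conclude that any z-region containing a lift of $w'$ also contains a lift of $y'$ and conversely.

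For Part~\ref{theorem:OrientationPreservingNonSeparatingPartTwo} you take a genuinely different route. The paper argues by contradiction using the transitive action of $A(v)$ on the z-regions of $S_v^{1_A}$: if one z-region $F$ had an odd number of boundary circles, then (say) $q$ odd east sides and $r$ even west sides, and since every z-region is of the form $c\cdot F$, summing over all $n$ z-regions gives $qn$ east sides and $rn$ west sides in $S_v^{1_A}$, contradicting that these counts must be equal. Your argument instead stays inside a single z-region $F$ and exhibits an explicit bijection $w'^b\mapsto y'^{bg}$ (with inverse $y'^c\mapsto w'^{cg^{-1}}$) between the fiber-over-$w'$ vertices of $F$ and the fiber-over-$y'$ vertices of $F$, forcing $p_F=q_F$ directly. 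This is cleaner: it bypasses the need to invoke transitivity of the $A(v)$-action on z-regions (which the paper asserts but only justifies later, effectively via Lemma~\ref{lemma:ZRegionsCosets}), and it yields the stronger pointwise statement that each z-region sees equal numbers of east and west sides, not merely that the boundary count is even. The paper's approach, on the other hand, makes the role of the covering symmetry more visible and would adapt more readily to statements about the z-regions being mutually homeomorphic.
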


\begin{proof} By the action of $A$ on itself by left multiplication and Lemma \ref{lemma:CosetRedux}, it suffices to consider the case in which $a=1_A$.  By Lemma \ref{lemma:ConnectedComponentsC}, the 1-chains contained in $\left \{z_v^b:\ b\in A(v) \right \rbrace$ are the 1-chains over $z$ that are contained in $S_v^{1_A}$.  If there is only one z-region of $S_v^{1_A}$ with respect to the $z_v^b$, then Part \ref{theorem:OrientationPreservingNonSeparatingPartOne} follows.

We now treat the case in which there are multiple z-regions of $S_v^{1_A}$ with respect to the $z_v^b$.  Since $G[z]$ is nonseparating, there is a $w'$-$y'$-walk $W'$ in $M'$ that does not transversely cross $G[z]$.  Let $\omega'$ denote $\omega(W')$.  Therefore, for each $w^{'c}$, there is a unique $W^{'c}$ that begins at $w^{'c}$, ends at $y^{'c\omega'}$, and does not transversely cross any cycle in $\left \{G[z]_v^b:\ b\in A(v)\right \rbrace$.  Fix $b\in A(v)$.  By Remark \ref{remark:VoltageClawSuperscripts},  one component of $R(z_v^b)\setminus G[z]_v^b$ contains $w^{'b}$ and other contains $y^{'b}$.  Following a lift of $W'$, we see that the z-region that contains $w^{'b}$ also contains $y^{'b\omega'}$.  Following a lift of $W'$ backwards, we see that the z-region that contains $y^{'b}$ also contains $w^{'b\omega^{-1}}$.  Part \ref{theorem:OrientationPreservingNonSeparatingPartOne} follows. 

We now move to proving Part \ref{theorem:OrientationPreservingNonSeparatingPartTwo}.  For $b\in aA(v)$, let the \textit{east side of $z_v^{b}$} denote the component of $p^{-1}(R(z)\setminus G[z])$ containing $w^{'b}$ and let the \textit{west side of $z_v^{b}$} denote the component of $p^{-1}(R(z)\setminus G[z])$ containing $y^{'b}$.  Since each lift of $W$ is orientation preserving, we see that the east side of $z_v^b$ contains each of the vertices \[w^{'b},\ w^{'b\omega},\ldots, w^{'b\omega^{|\omega|-1}},\] and the west side contains each of the vertices  \[y^{'b},\ y^{'b\omega},\ldots, y^{'b\omega^{|\omega|-1}}.\]  For a fixed $b$, consider a z-region $F$ with respect to $\left \{z_v^b:\ b\in aA(v)\right \rbrace$, and assume for the sake of contradiction that $F$ is bounded by an odd number of cycles of $G[z]^\alpha$.  Since $F$ is bounded by an odd number of cycles of $G[z]^\alpha$, $F$ must contain a number of east sides of and west sides of the $z_v^b$ of different parity.  Since $F$ contains at least one vertex in each the fibers over $w'$ and $y'$, $F$ contains at least one east side and one west side of the $z_v^b$ in its boundary.  Without loss of generality, assume that $F$ contains a nonzero odd number $q$ of east sides and an nonzero even number $r$ of west sizes of the $z_v^b$.  Using the action of $A$ on $S_v^{1_A}$ and the associated symmetry, each z-region $\bar{S}_{w'}^d$ with respect to the $z_v^b$ can be expressed as $c\cdot F$ for some $c\in A(v)$.  Assuming that there are a total of $n$ z-regions we see that $S_v^{1_A}$ contains $qn$ east sides and $rn$ west sides, which contradicts that there must be the same number of east and west sides in $S_v^{1_A}$ bounded by the $G[z]_v^b$.  This completes Part \ref{theorem:OrientationPreservingNonSeparatingPartTwo}.\end{proof}

\begin{corollary}\label{corollary:OrirntationPreservingNonSeparating} Each of the cycles in $\left \{G[z]_v^b:\ b\in aA(v)\right \rbrace$ is nonseparating.\end{corollary}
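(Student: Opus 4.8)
The plan is to translate the statement into a property of the z-graph and then close with a parity argument. Fix the component $S_v^a$; since every circle $(G\mycolon z)_v^b$ with $b\in aA(v)$ is contained in $S_v^a$, the relevant notion is that such a circle is separating exactly when its removal disconnects $S_v^a$. First I would invoke Lemma \ref{lemma:ConnectedComponentsC} with $I=\sum_{f\in C_2(S)}f$, so that $S\mycolon I=S$, to conclude that the circles $(G\mycolon z)_v^b$, $b\in aA(v)$, are precisely the circles of $(G\mycolon z)^\alpha$ lying in $S_v^a$. Because $G\mycolon z$ is orientation-preserving, so is each of its lifts, hence $\{z_v^b:b\in aA(v)\}$ is a set of $1$-chains with property $\Delta$, and the associated z-graph $\Gamma$ of $S_v^a$ with respect to this set is defined and, by Remark \ref{remark:ZGraph}, connected; its edges are the circles $(G\mycolon z)_v^b$ and its vertices are the z-regions of $S_v^a$ with respect to them.

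The key step is the observation that a circle $(G\mycolon z)_v^b$ is separating in $S_v^a$ if and only if the corresponding edge of $\Gamma$ is a bridge. This follows from the way $S_v^a$ is assembled: each z-region is a connected sub-$2$-complex whose boundary is a union of circles from the family, so removing one such circle $C$ from a single z-region leaves it connected, and therefore deleting $C$ from $S_v^a$ disconnects it if and only if deleting the edge $C$ disconnects $\Gamma$. Pinning this equivalence down rigorously — in particular handling the case in which $C$ appears in the boundary of one z-region twice, which is exactly the case in which $C$ is a loop of $\Gamma$ and is automatically non-separating — is where I expect to spend the most care; it is routine bookkeeping with the definitions of z-region and z-graph, but it is the heart of the argument.

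It then remains to show $\Gamma$ has no bridge, and for this I would show that every vertex of $\Gamma$ has even degree. If $S_v^a$ has a single z-region with respect to the family, then $\Gamma$ has one vertex and every edge is a loop, so the degree of that vertex is even. If $S_v^a$ has more than one z-region, then Part \ref{theorem:OrientationPreservingNonSeparatingPartTwo} of Theorem \ref{theorem:OrientationPreservingNonSeparating} says each z-region is bounded by an even number of circles of $(G\mycolon z)^\alpha$, and this count equals the degree of that z-region as a vertex of $\Gamma$ (a circle bounding a z-region on both of its sides contributes $2$, consistently with being a loop). Finally, a connected graph all of whose vertices have even degree has no bridge: were $e=uv$ a bridge, then in the component of $\Gamma-e$ containing $u$ the vertex $u$ would have odd degree while every other vertex kept its even degree, forcing an odd degree sum, which is impossible. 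Hence no edge of $\Gamma$ is a bridge, and by the previous paragraph no circle $(G\mycolon z)_v^b$ with $b\in aA(v)$ is separating. By transitivity of the $A$-action on components of $S^\alpha$ this in fact holds for every circle of $(G\mycolon z)^\alpha$, although only those in $S_v^a$ are asserted in the statement.
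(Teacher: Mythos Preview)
Your argument is correct and follows the same route as the paper: translate to the z-graph, use Part~\ref{theorem:OrientationPreservingNonSeparatingPartTwo} of Theorem~\ref{theorem:OrientationPreservingNonSeparating} to see every vertex has even degree, and conclude there is no bridge. The paper's version is simply terser --- it notes in addition that the $A$-action forces the z-graph to be \emph{regular} of even degree --- but even degree alone already rules out bridges, so your version loses nothing.
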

\begin{proof} This is immediate if there is only one z-region of $S_v^a$ with respect to $\left \{z_v^b:\ b\in aA(v)\right \rbrace$.  If there is more than one such z-region, then Part \ref{theorem:OrientationPreservingNonSeparatingPartTwo} of Theorem \ref{theorem:OrientationPreservingNonSeparating} implies the conclusion since the corresponding z-graph is a regular graph with each vertex of nonzero even degree.\end{proof}

\subsubsection{Cosets and z-graphs from Archdeacon's enhancements}

Constructions \ref{construction:OrientationReversingZGraph} and \ref{construction:OrientationPreservingZGraph} allow us to algebraically describe a decomposition of any component $S_v^a$ of $S^\alpha$ in terms of the z-regions associated to the cycles forming the fiber over an orientation-reversing cycle that lifts to orientation-preserving cycles, or to the cycles forming the fiber over orientation-preserving cycles, respectively.

Consider $\langle G\rightarrow S,\alpha \rightarrow A\rangle$ and $z\in Z(G)$ such that $G[z]$ induces a nonseparating cycle.  Fix $v\in V(G)$ and let $W=d_1d_2\ldots d_k$ be an Eulerian walk of $G[z]$ based at $v\in V(G[z])$. Consider the extended voltage assignment to the total graph $T(G)$, the special claw corresponding to $E(d_1)$ and let $w'$ and $y'$ take the place of $w$ and $y$ in Definition \ref{definition:SpecialClaw}, respectively.  Implicitly using the equality $A'(w')=A(v)$ occurring in Lemma \ref{lemma:MedialGroupsEqual} we are able to establish a bijection between left cosets of $A'(w',G[z])$ contained in $aA(v)$ and the z-regions of $S_v^a$ with respect to $\left \{z_v^c:\ c\in aA(v)\right \rbrace$, which, by Lemma \ref{lemma:ConnectedComponentsC}, is the set of 1-chains in the fiber over $z$ that is contained in $S_v^a$.

\begin{lemma}\label{lemma:ZRegionsCosets}For a fixed $a\in A(v)$, the map $\phi_{\veebar}\colon \left \{bA^\veebar(w',G[z]):\ b\in aA'(w')\right \rbrace \rightarrow \left \{\bar{S}_{w'}^b: b\in aA'(w')\right \rbrace$ defined by \[ \phi_{\veebar}(bA^\veebar(w',G[z])) = \bar{S}_{w'}^b(z)\] is a bijection.\end{lemma}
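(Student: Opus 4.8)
The plan is to mimic, nearly verbatim, the proof strategy already established for the chain of lemmas \ref{lemma:ConnectedComponentsSalpha}--\ref{lemma:ConnectedComponentsD}: reduce to a Gross--Alpert type statement by the free and transitive action of $A$ on the relevant fiber. First I would invoke Lemma \ref{lemma:CosetRedux} applied to the containment $A^\veebar(w',G\mycolon z)\le A'(w')$ together with the transitive left action of $A$ on the fiber over $w'$ (equivalently, on the components of $S^\alpha$, using Theorem \ref{theorem:GrossAlpert} and Lemma \ref{lemma:MedialGroupsEqual}'s identification $A'(w')=A(v)$) to reduce to the case $a=1_A$, so that the domain is the set of left cosets of $A^\veebar(w',G\mycolon z)$ in $A'(w')$ and the codomain is the set of z-regions $\bar S_{w'}^b(z)$ contained in $S_v^{1_A}$. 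Well-definedness and injectivity of $\phi_{\veebar}$ then reduce to the single assertion that $\bar S_{w'}^{b}(z)=\bar S_{w'}^{c}(z)$ if and only if $b^{-1}c\in A^\veebar(w',G\mycolon z)$; surjectivity is immediate once we know (via Lemma \ref{lemma:ConnectedComponentsC}, as already noted in the paragraph preceding the statement) that every z-region of $S_v^{1_A}$ with respect to $\{z_v^b : b\in A(v)\}$ arises as $\bar S_{w'}^{b}(z)$ for some $b\in A'(w')$.

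The heart of the matter is therefore the biconditional. For the ``if'' direction I would argue that if $b^{-1}c\in A^\veebar(w',G\mycolon z)$ then there is a closed walk $W'$ in $M'$ based at $w'$ with net voltage $b^{-1}c$ that does not transversely cross $G\mycolon z$ (Definition \ref{definition:TransverseCrossing}); lifting $W'$ starting at $w^{'b}$ and using that $W'$ never passes from one side of a lifted circle $(G\mycolon z)_v^{d}$ to the other, one stays in a single z-region for the entire lift, so $w^{'b}$ and $w^{'c}=w^{'b(b^{-1}c)}$ lie in the same z-region, i.e. $\bar S_{w'}^{b}(z)=\bar S_{w'}^{c}(z)$. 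Here I would lean on the fact (from Section \ref{subsubsection:OrientationPreservingCircle}) that crossing from one component of $R(z_v^{d})\setminus (G\mycolon z)_v^d$ to the other can only happen at a transverse crossing, which is exactly what the definition of $A^\veebar$ forbids. For the ``only if'' direction, suppose $w^{'b}$ and $w^{'c}$ are in the same z-region $F$. Since $F$ is a connected subcomplex, there is a walk in the $1$-skeleton of $F$ from $w^{'b}$ to $w^{'c}$; projecting it down gives a closed walk $W'$ in $M'$ based at $w'$ with $\omega(W')=b^{-1}c$, and because the lifted walk stays inside the single z-region $F$ it never crosses any lifted circle transversely, hence neither does $W'$ (a transverse crossing upstairs projects to one downstairs and vice versa), so $\omega(W')\in A^\veebar(w',G\mycolon z)$, giving $b^{-1}c\in A^\veebar(w',G\mycolon z)$.

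The main obstacle I anticipate is making the notion ``the lift of $W'$ stays in one z-region'' fully rigorous, i.e. relating the combinatorial condition ``does not transversely cross'' to the topological condition ``does not leave the z-region.'' One must be careful that a walk $W'$ in $M'$ can pass through a vertex $w^{'}_e$ of the medial graph that corresponds to an edge $e$ of $G\mycolon z$ without ``crossing'' $G\mycolon z$ — it can run alongside the circle on one side — and that the darts of $M'$ incident to such a vertex split naturally into those meeting the two sides of the corners at the endpoints of $e$, so a subwalk $d_i d_{i+1}$ crosses precisely when $d_i$ and $d_{i+1}$ meet opposite components of $U^*(h(d_i))\setminus (G\mycolon z)$, which is Definition \ref{definition:TransverseCrossing} verbatim. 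I would handle this by appealing to the structure of the medial graph and the ribbon neighborhood $R(z)$: a z-region is a union of faces and (sub)edges of $S$ on one side of $G\mycolon z$, and a medial-graph walk lies within it as long as it never jumps sides, which is exactly non-transverse-crossing. Once that correspondence is spelled out, the rest is a routine transcription of the Gross--Alpert argument. I would close by remarking that the bijection respects the $A$-action (so the omitted general-$a$ case indeed follows from the $a=1_A$ case), completing the proof.

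\begin{proof} By Lemma \ref{lemma:CosetRedux} applied to the chain $A^\veebar(w',G\mycolon z)\le A'(w')$, together with Lemma \ref{lemma:MedialGroupsEqual} (so that $A'(w')=A(v)$) and the transitive left action of $A$ on the fiber over $w'$, it suffices to treat the case $a=1_A$; that is, we show that $\phi_{\veebar}$ restricts to a bijection between the left cosets of $A^\veebar(w',G\mycolon z)$ in $A'(w')$ and the z-regions $\bar S_{w'}^b(z)$ with $b\in A'(w')$, which by Lemma \ref{lemma:ConnectedComponentsC} are precisely the z-regions of $S_v^{1_A}$ with respect to $\left \{z_v^b:\ b\in A(v)\right \rbrace$. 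Surjectivity onto this set of z-regions is then immediate, and it remains to establish that $\bar S_{w'}^b(z)=\bar S_{w'}^c(z)$ if and only if $b^{-1}c\in A^\veebar(w',G\mycolon z)$.

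Suppose first that $b^{-1}c\in A^\veebar(w',G\mycolon z)$. Then there is a closed walk $W'$ in $M'$ based at $w'$ with net voltage $b^{-1}c$ that does not transversely cross $G\mycolon z$. Lift $W'$ to the walk beginning at $w^{'b}$; since $W'$ contains no subwalk $d_id_{i+1}$ of the type described in Definition \ref{definition:TransverseCrossing}, the lifted walk never passes from one component of $R(z_v^d)\setminus (G\mycolon z)_v^d$ to the other for any lifted circle $(G\mycolon z)_v^d$ it meets, and hence remains within a single z-region of $S_v^{1_A}$. The lift ends at $w^{'b(b^{-1}c)}=w^{'c}$, so $\bar S_{w'}^b(z)=\bar S_{w'}^c(z)$.

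Conversely, suppose $w^{'b}$ and $w^{'c}$ lie in the same z-region $F$ of $S_v^{1_A}$. Since $F$ is a connected subcomplex of $S^\alpha$, there is a walk in the $1$-skeleton of $F$ from $w^{'b}$ to $w^{'c}$; since this walk stays in $F$, it never passes from one side of any bounding circle $(G\mycolon z)_v^d$ to the other. Projecting this walk by $p$ yields a closed walk $W'$ in $M'$ based at $w'$ with $\omega(W')=b^{-1}c$; because a subwalk of the projected walk would constitute a transverse crossing of $G\mycolon z$ exactly when the corresponding subwalk upstairs passes from one side of a lifted circle to the other, $W'$ does not transversely cross $G\mycolon z$. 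Hence $b^{-1}c=\omega(W')\in A^\veebar(w',G\mycolon z)$.

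This proves the biconditional, so $\phi_{\veebar}$ is well defined and injective on cosets, and together with surjectivity it is a bijection in the case $a=1_A$. The bijection commutes with the left $A$-action, so the general case follows from Lemma \ref{lemma:CosetRedux} and the transitivity of the $A$-action on the components of $S^\alpha$.\end{proof}
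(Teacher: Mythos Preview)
Your proof is correct and follows essentially the same route as the paper's: reduce to $a=1_A$ via the left $A$-action, then establish the biconditional $\bar S_{w'}^b(z)=\bar S_{w'}^c(z)\Longleftrightarrow b^{-1}c\in A^\veebar(w',G\mycolon z)$ by lifting/projecting walks in $M'$ that do not transversely cross $G\mycolon z$. The only notable difference is bookkeeping: the paper invokes Part~\ref{theorem:BigOrientationReversingCircleTheoremPart1} of Theorem~\ref{theorem:BigOrientationReversingCircleTheorem} and Part~\ref{theorem:OrientationPreservingNonSeparatingPartOne} of Theorem~\ref{theorem:OrientationPreservingNonSeparating} to guarantee that every z-region contains a vertex in the fiber over $w'$ (so that the codomain really is the full set of z-regions), whereas you appeal to Lemma~\ref{lemma:ConnectedComponentsC}, which does not quite say this; and in your ``only if'' direction the phrase ``walk in the $1$-skeleton of $F$'' should read ``walk in $M'^\alpha$ that stays in $F$,'' since $w'^b,w'^c$ are medial-graph vertices. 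Neither point affects the substance of the argument.
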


\begin{proof}  By Part \ref{theorem:BigOrientationReversingCycleTheoremPart1} of Theorem \ref{theorem:BigOrientationReversingCycleTheorem} and Part \ref{theorem:OrientationPreservingNonSeparatingPartOne} of Theorem \ref{theorem:OrientationPreservingNonSeparating}, each $\bar{S}_{w'}^b\ b\in A'(w')$ contains at least one vertex in the fiber over $w'$.  By Lemma \ref{lemma:GrossAlpertRedux} and the $A$-action on $S^\alpha$, it suffices to consider the case $a=1_A$.  It is apparent that $\bar{S}_{w'}^{d_1}=\bar{S}_{w'}^{d_2}$ if and only if there is a $w^{'d_1}$-$w^{'d_2}$-walk $W'$ in $M'^\alpha$ that does not transversely cross any component of $G[z]^\alpha$.  Given such a $w^{'d_1}$-$w^{'d_2}$-walk $W'$, it follows that $p(W^{'})$ is a $w'$-walk in $M'$ of net voltage $d_1^{-1}d_2$ that does not transversely cross $G[z]$.  The result follows.\end{proof}

After using the equality $A'(w')=A(v)$ of Lemma \ref{lemma:MedialGroupsEqual}, Theorem \ref{theorem:BigCosetTheorem2} follows from Lemma \ref{lemma:ZRegionsCosets} just as Theorem \ref{theorem:BigCosetTheorem} followed from Lemmas \ref{lemma:ConnectedComponentsSalpha}, \ref{lemma:ConnectedComponentsB}, \ref{lemma:ConnectedComponentsC}, and \ref{lemma:ConnectedComponentsD}.  The proof is omitted.

\begin{theorem}\label{theorem:BigCosetTheorem2} Let $z\in Z(G)$ induce a nonseparating cycle such that $G[z]^\alpha$ is a disjoint union of orientation-preserving cycles.  There are \[\frac{|A(v)|}{|A'(w',G[z])|}\] z-regions of $S_v^a$ with respect to $\left \{z_v^b:\ b\in aA(v)\right \rbrace$.\end{theorem}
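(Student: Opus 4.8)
The plan is to obtain this statement exactly as Theorem~\ref{theorem:BigCosetTheorem} was obtained from its supporting lemmas: once Lemma~\ref{lemma:ZRegionsCosets} has matched up the z-regions in question with cosets of a subgroup of $A(v)$, the count is a one-line application of Lagrange's theorem. Throughout I read $A'(w',G\mycolon z)$ in the statement as the group $A^{\veebar}(w',G\mycolon z)$ of Definition~\ref{definition:TransverseCrossingGroup} (the net voltages of $w'$-walks in $M'$ that do not transversely cross $G\mycolon z$); since $A^{\veebar}(w',G\mycolon z)$ is by definition a subgroup of $A'(w')$ and Lemma~\ref{lemma:MedialGroupsEqual} identifies $A'(w')$ with $A(v)$, we have the chain $A^{\veebar}(w',G\mycolon z)\le A'(w')=A(v)\le A$, so $A^{\veebar}(w',G\mycolon z)$ is a subgroup of $A(v)$.

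First I would invoke Lemma~\ref{lemma:ZRegionsCosets}, which asserts that $\phi_{\veebar}$ is a bijection from the set $\{bA^{\veebar}(w',G\mycolon z):\ b\in aA'(w')\}$ of left cosets of $A^{\veebar}(w',G\mycolon z)$ inside $aA'(w')$ onto $\{\bar{S}_{w'}^{b}(z):\ b\in aA'(w')\}$. Using the identification $aA'(w')=aA(v)$, the domain is the set of left cosets of $A^{\veebar}(w',G\mycolon z)$ contained in $aA(v)$; by Lemma~\ref{lemma:CosetRedux} applied to $A^{\veebar}(w',G\mycolon z)\le A(v)\le A$ these cosets partition $aA(v)$, so by Lagrange's theorem their number is $|aA(v)|/|A^{\veebar}(w',G\mycolon z)| = |A(v)|/|A^{\veebar}(w',G\mycolon z)|$. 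It then remains only to observe that the image set $\{\bar{S}_{w'}^{b}(z):\ b\in aA(v)\}$ is precisely the set of z-regions of $S_v^a$ with respect to $\{z_v^b:\ b\in aA(v)\}$: this is the content of the discussion preceding Lemma~\ref{lemma:ZRegionsCosets}, where Lemma~\ref{lemma:ConnectedComponentsC} (taking $I=\sum_{f\in C_2(S)}f$) shows that the $1$-chains $z_v^b$, $b\in aA(v)$, are exactly the $1$-chains over $z$ lying in $S_v^a$, while Part~\ref{theorem:BigOrientationReversingCircleTheoremPart1} of Theorem~\ref{theorem:BigOrientationReversingCircleTheorem} and Theorem~\ref{theorem:OrientationPreservingNonSeparating} guarantee that each such z-region meets the fiber over $w'$ and hence equals $\bar{S}_{w'}^{b}(z)$ for some $b\in aA(v)$. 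Composing the two bijections (the one from Lemma~\ref{lemma:ZRegionsCosets} and the identity identification of the image set with the set of z-regions) with the index count above finishes the proof.

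I do not anticipate a genuine obstacle: all of the topological work — the parametrization of the z-regions of $S_v^a$ by the cosets of $A^{\veebar}(w',G\mycolon z)$ — has already been carried out in Lemma~\ref{lemma:ZRegionsCosets}, and what is left here is the purely group-theoretic index computation together with the identification $A'(w')=A(v)$. The only point requiring a little care is that the hypothesis ``$(G\mycolon z)^\alpha$ is a disjoint union of orientation-preserving circles'' subsumes two distinct regimes — $G\mycolon z$ orientation-reversing with $|\omega(W)|$ even, treated in Section~\ref{subsubsection:OrientationReversingCircle}, and $G\mycolon z$ orientation-preserving and nonseparating, treated in Section~\ref{subsubsection:OrientationPreservingCircle} — and one should note that Lemma~\ref{lemma:ZRegionsCosets} was stated so as to draw on Theorem~\ref{theorem:BigOrientationReversingCircleTheorem} and Theorem~\ref{theorem:OrientationPreservingNonSeparating} together and therefore covers both.
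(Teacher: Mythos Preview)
Your proposal is correct and follows precisely the route the paper indicates: after identifying $A'(w')=A(v)$ via Lemma~\ref{lemma:MedialGroupsEqual}, apply the bijection of Lemma~\ref{lemma:ZRegionsCosets} between z-regions and left cosets of $A^{\veebar}(w',G\mycolon z)$ in $aA(v)$, and then count those cosets by Lagrange's theorem. Your reading of $A'(w',G\mycolon z)$ as $A^{\veebar}(w',G\mycolon z)$ and your remark that Lemma~\ref{lemma:ZRegionsCosets} simultaneously handles the orientation-reversing (even $|\omega|$) and orientation-preserving nonseparating cases are both on the mark.
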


For $a\in A$, let $b_1, b_2,\ldots,b_j$ denote representatives of the left cosets of $\langle \omega(W)\rangle $ in $aA(v)$.  Also, for the purposes  of Constructions \ref{construction:OrientationReversingZGraph} and \ref{construction:OrientationPreservingZGraph}, we let $\omega$ denote $\omega(W)$ and $|\omega|$ denote $|\langle \omega\rangle|$.

\begin{construction}\label{construction:OrientationReversingZGraph} Let $z$ induce an orientation-reversing cycle.  Let $|\omega|$ be even so that, per Lemma \ref{lemma:OrientationReversingVoltage}, the cycles forming the fiber over $G[z]$ are orientation-preserving.  Since the east and west sides of $z_v^{1_A}$ contain alternating vertices in the list \[w'^{1_A},w^{'\omega},w^{'\omega^2},w^{'\omega^3},\ldots,w^{'\omega^{|\omega|-1}},\] we see that the z-regions containing $G[z]_v^{1_A}$ in their boundaries contain all of the vertices in one (or both) of the lists \[w^{'1_A},\ w^{'\omega^2},\ldots, w^{'\omega^{|\omega|-2}}, \]  \[w^{'\omega},w^{'\omega^3},\ldots,w^{'\omega^{|\omega|-1}}.\]  In the case of there being only one z-region of $S_v^{1_A}$ with respect to $\left \{z_v^a:\ a\in A(v)\right \rbrace$, then that z-region will contain all of the vertices of \[w^{'1_A}, w^{'\omega},w^{'\omega^2},w^{'\omega^3},\ldots, w^{'|\omega|-1}.\]  Since the $A$-action on $S^\alpha$ is free on the vertices in the fiber over $w'$, we see that, for each $c\in A$, each component of $R(z_v^{c})\setminus G[z]_v^c$ contains the vertices in the fiber over $w'$ whose superscripts  are those found in one of the left cosets \[\left \{c, c{\omega^2},\ c{\omega^4}\ldots,\ c{\omega^{|\omega|-2}} \right \rbrace,\] \[\left \{c\omega,\ c{\omega^3},\ c{\omega^5},\ldots,\ c\omega^{|\omega|-1}\right \rbrace.\]

Using the equality of Lemma \ref{lemma:MedialGroupsEqual} implicitly, we see that per Lemma \ref{lemma:ZRegionsCosets}, the z-regions of $S_v^a$ relative to $\left \{z_v^{b_1},\ldots,\ z_v^{b_j}\right \rbrace$ correspond bijectively to the left cosets of $A'(w',G[z])$ contained in $aA(v)$.  Recall that in the derived embedding of the total graph, the vertices $w^{'x}$ and $y^{'x}$ are in the same component of the special claw corresponding to $E(d_1)$ as $v^x$.  Implicitly using the bijection of Lemma \ref{lemma:ConnectedComponentsB}, we see that the left cosets of $\langle \omega \rangle$ contained in $aA'(w')$ correspond bijectively with the 1-chains in the fiber over $z$ in $S_v^a$, and that the left cosets of $\langle \omega^2 \rangle$ contained in the left coset  $\left \{c, c\omega, c\omega^2, \ldots, c\omega^{|\omega|-1}\right \rbrace$ correspond bijectively to the two components of $R(z_v^{c})\setminus G[z]_v^c$.

Thus, using the aforementioned bijections implicitly, we may use the containment of the aforementioned left cosets to construct $\Gamma(z_v^{b_1},\ldots,\ z_v^{b_j})$: the left cosets of $A'(w')$ contained in $aA(v)$ are the vertices, the left cosets of $\langle \omega \rangle$ are the edges, and the two left cosets of $\langle \omega^2\rangle$ contained in the same left coset of $\langle \omega \rangle$ are the two ends of the same edge; an edge end is incident to the vertex that contains it. \hfill $\Box$ \end{construction}

\begin{construction}\label{construction:OrientationPreservingZGraph}  Let $z$ induce an orientation-preserving nonseparating cycle; we make no additional assumptions about $|\omega|$.  We define the set $A^\veebar(w',y',G[z])$ to be the set (not necessarily a group) of net voltages of walks in $M'$ beginning at $w'$, ending at $w'$ or $y'$, and not transversely crossing $G[z]$.  The two vertices $w^{'a}$ and $y^{'b}$ are in the same z-region, $\bar{S}_v^a$, with respect to $\left \{z_v^b:\ b \in aA(v)\right \rbrace$ if and only if there is a walk $\tilde{W}$ in the embedded $M^{'^\alpha}$ beginning at $w^{'a}$, ending at $y^{'b}$, and not transversely crossing any element of $\left \{G[z]_v^b:\ b\in aA(v)\right \rbrace$.  The last statement is true if and only if $\omega(p(\tilde{W}))=a^{-1}b$ is an element of $A^\veebar(w',y',G[z])$.  For similar reasons, $w^{'a}$ and $w^{'b}$ are contained in the same z-region if and only if $a^{-1}b$ is an element of $A^\veebar(w',y',G[z])$.  For $a_1\in A$, let $a_1\cdot A^\veebar(w',y',G[z])=\left \{a_1r: r\in A^\veebar(w',y',G[z])\right \rbrace$.  Recall that the vertices $w^{'c},\ y^{'c}$, and $v^c$ are contained in the same lift of the special claw corresponding to $E(d_1)$, and each lift of $W$ is an orientation-preserving walk.  Thus, if  $\bar{S}_{w'}^d$ contains $w^{'c}$, then it contains $w^{'c\omega^2},\ w^{'c\omega^3},\ldots,\ w^{'c\omega^{|\omega|-1}}$.  Similarly, if $\bar{S}_{w'}^d$ contains a vertex $y^{'c}$, then it contains $y^{'c\omega^2},y^{c\omega^3},\ldots,\ y^{'c\omega^{|\omega|-1}}$.  It follows that each left coset of $\langle \omega \rangle$ is contained in one or two left cosets of the form $a_1\cdot A^\veebar(w',y',G[z])$.

To any left coset $bA^\veebar(w',G[z])$ contained in $aA'(w,G[z])$, we may associate a unique set $b\cdot A^\veebar(w',y',G[z])$ that contains the group-element superscripts of the vertices in the fibers over $w'$ and $y'$ contained in $\bar{S}_{w'}^b$.  Implicitly using this bijection, the equality of Lemma \ref{lemma:MedialGroupsEqual}, and the bijections of Lemmas \ref{lemma:ConnectedComponentsC} and \ref{lemma:ZRegionsCosets}, we may say that the vertices and the edges of $\Gamma(z_v^{b_1},\ldots,z_v^{b_j})$ are the sets of the form $a_1\cdot A^\veebar(w',y',G[z])$ and the left cosets of the form $b\langle \omega \rangle$ contained in $aA(v)$, respectively; an edge is incident to the one or two vertices that contain it.\hfill $\Box$ \end{construction}

\section{Examples}\label{section:examples}

Here we give some infinite families of ordinary voltage graph embeddings whose derived embeddings have specific properties that are guaranteed by the results of the previous sections.  We show how one may use the subgroups of a voltage group $A$ developed in Section \ref{section:cosets}, and their cosets, to decompose a derived surface as a union of surfaces with boundary, whose boundary components are the fiber over a cycle in the base embedding.  Therefore, we may derive topological insight into the derived surface without constructing the entire derived embedding.  Given $a\in A$, we let $|a|$ denote $|\langle a\rangle |$ for the remainder of this section.  

Example \ref{example:SeparatingCycle} shows that for any positive integers $a$ and $b$ we can construct an ordinary voltage graph embedding containing a separating cycle $G[z]$ bounding two induced regions $S[I]$, $S[I^c]$ such that: \begin{itemize} \item{there is only one component of the derived surface, and} \item{the number of components of $S[I]^\alpha$ and $S[I^c]^\alpha$ are determined by the greatest common divisor of $a$ and $b$, and} \item{the number of cycles of $G[z]^\alpha$ bounding the components of $S[I]^\alpha$ and $S[I^c]^\alpha$ are determined by the greatest common divisor of $a$ and $b$.}   \end{itemize}

\begin{example}\label{example:SeparatingCycle}  Let $a$ and $b$ be positive integers, let $n=ab$.  Let $\mbox{lcm(a,b)}$ denote the least common multiple of $a$ and $b$, and let $\mbox{gcd(a,b)}$ denote the greatest common divisor of $a$ and $b$.  Let $c=\frac{\mbox{lcm}(a,b)}{a}$, $d=\frac{\mbox{lcm}(a,b)}{b}$, and consider Figure \ref{figure:SeparatingCycleExample}.  Let $S$ stand in the place of the sphere $S^2$.  Since $\mbox{lcm(a,b)}=\frac{ab}{\mbox{gcd(a,b)}}$, $c$ and $d$ are relatively prime, and thus $\langle c,d\rangle=\mathbb{Z}_n$.  Let $z=e_2$, and note that $z=\partial (f_1+f_2)=\partial(f_3+f_4)$.  Let $I=f_1+f_2$ and $I^c=f_3+f_4$.  Note that $A(v,S[I])=\langle d\rangle$, $A(v,S[I^c])= \langle c \rangle$, and $A(v,G[z])=\left \{ 0 \right \rbrace$.  By Part \ref{theorem:BigCosetTheoremPart1} of Theorem \ref{theorem:BigCosetTheorem}, there is only one component of $S^\alpha$.  By Part \ref{theorem:BigCosetTheoremPart2} of Theorem \ref{theorem:BigCosetTheorem}, there are $\frac{n}{|d|}$ components of $S[I]^\alpha$ contained in $S^\alpha$, and there are $\frac{n}{|c|}$ components of $S[I^c]^\alpha$ contained in $S^\alpha$.  By Part \ref{theorem:BigCosetTheoremPart3} of Theorem \ref{theorem:BigCosetTheorem}, there are $|d|$ components of $G[z]^\alpha$ contained in each component of $S[I]^\alpha$ and there are $|c|$ components of $G[z]^\alpha$ contained in each component of $S[I^c]^\alpha$.  By Construction \ref{construction:SeparatingZGraph}, we can construct $\Gamma(z_v^0,\ldots,z_v^{n-1})$ by understanding the containments of the left cosets of $A(v,G[z])$ in the left cosets of $A(v,S[I])$ and the left cosets of $A(v,S[I^c])$: the edge $z_v^r$ is incident to the vertices $S[I]_v^a$ and $S[I^c]_v^a$ if and only if $r\in \langle c\rangle \cap \langle d \rangle$.\end{example}	
	
\begin{figure}[H]
\begin{center}
\includegraphics[scale=.3]{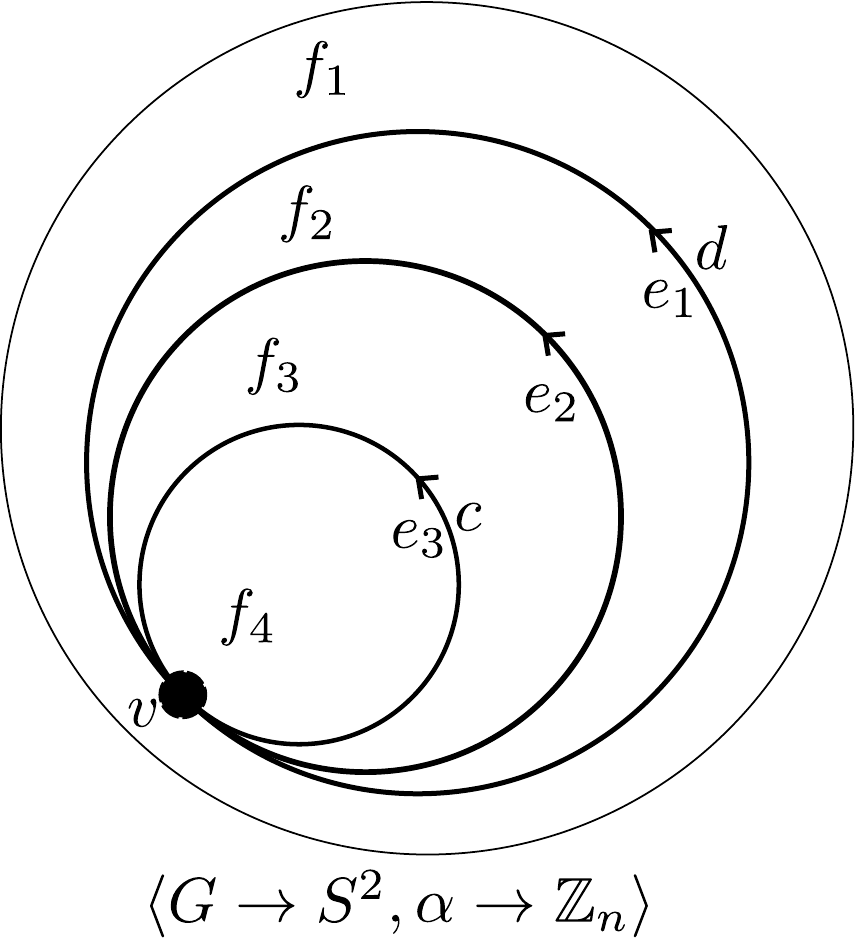}
\caption{An ordinary voltage graph embedding in the sphere $S^2$.  Only the nonidentity voltages are indicated.}\label{figure:SeparatingCycleExample}
\end{center}
\end{figure}

Example \ref{example:P2Example1} shows that for any positive integer $n$, there exists an ordinary voltage graph embedding such that: \begin{itemize} \item{there is only one component of the derived surface, and} \item{an orientation-reversing cycle $G[z]$ lifts to $n$ orientation-preserving cycles, and} \item{there are exactly two z-regions with respect to the fiber over $z$.}\end{itemize}

\begin{example}\label{example:P2Example1} Let $n$ denote any positive integer, and let and $ab$ stand for $(a,b) \in \mathbb{Z}_2 \times \mathbb{Z}_n$.  Consider Figure \ref{fig:P2Example1}.  Let $S$ stand in the place of the projective plane $P^2$.  Since there are loops in $G$ whose voltages generate $\mathbb{Z}_2\times\mathbb{Z}_n$, Part \ref{theorem:BigCosetTheoremPart1} of Theorem \ref{theorem:BigCosetTheorem} implies that there is only one component of $S^\alpha$.  Let $z\in Z(G)$ be the loop $e$, and note that $A(v,G[z])= \langle 10\rangle$.  By Part \ref{theorem:BigCosetTheoremPart3} of Theorem \ref{theorem:BigCosetTheorem}, there are $n$ cycles forming $G[z]^\alpha$.  In this case $A'(w,G[z])\cong \langle 01\rangle$.  Thus, by Theorem \ref{theorem:BigCosetTheorem2}, there are two homeomorphic z-regions of $S^\alpha$ with respect to $\left \{z_v^a:\ a\in \mathbb{Z}_2\times \mathbb{Z}_n \right \rbrace$, each bounded by the $n$ cycles forming $G[z]^\alpha$.  There are only two z-regions of $S^\alpha$ with respect to the 1-chains forming the fiber over $z$.  Per Construction \ref{construction:OrientationReversingZGraph}, we conclude that the z-graph $\Gamma(z_v^{00},z_v^{01},\ldots,z_v^{0(n-1)})$ consists of $n$ parallel edges joining two vertices.\end{example}

\begin{figure}[H]
\begin{center}
\includegraphics[scale=.4]{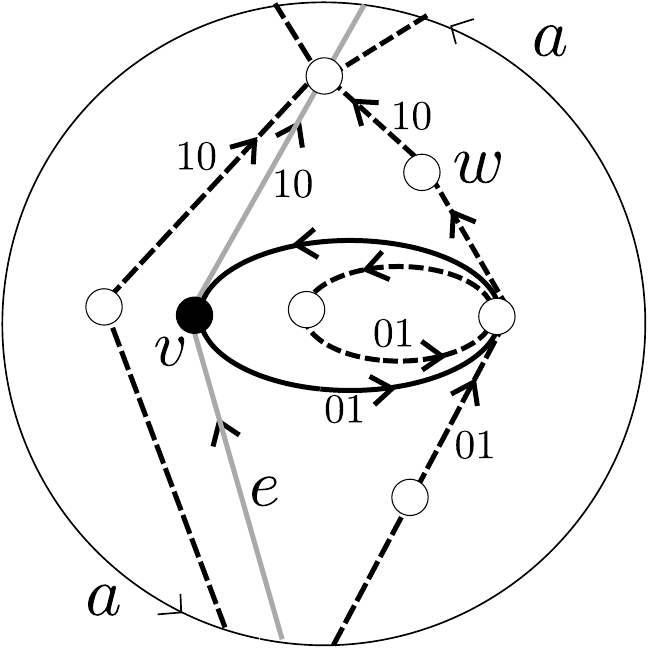}
\caption{An ordinary voltage graph embedding in the projective plane $P^2$ with the extended voltage assignment to the total graph.  We let $ab$ stand in the place of $(a,b)\in \mathbb{Z}_2\times\mathbb{Z}_n$.  Only the nonidentity voltages are indicated.  The vertices ond edges of $M'$ are white and dashed, respectively.  The edge $e$ of $G$ is colored grey.}\label{fig:P2Example1}
\end{center}
\end{figure}

Example \ref{example:P2Example2} shows that for any positive integer $n$, we can construct an ordinary voltage graph embedding such that: \begin{itemize} \item{there is only once component of the derived surface, and}  \item{an orientation-reversing cycle $G[z]$ lifts to $n$ orientation-preserving cycles, and} \item{there is only one z-region with respect to the fiber over $z$.} \end{itemize}

\begin{example}\label{example:P2Example2} Let $n$ denote any positive integer, and let $ab$ stand for $(a,b) \in \mathbb{Z}_2 \times \mathbb{Z}_n$.  Consider Figure \ref{fig:P2Example2}, and let $z\in Z(G)$ be the loop $e$ in Figure \ref{fig:P2Example2}.  Let $S$ stand in the place of $P^2$.  Since there are loops in $G$ whose voltages generate $\mathbb{Z}_2\times\mathbb{Z}_n$, Part \ref{theorem:BigCosetTheoremPart1} of Theorem \ref{theorem:BigCosetTheorem} implies that there is only one component of $S^\alpha$.  Note that $A(v,G[z])= \langle 10\rangle$.  By Part \ref{theorem:BigCosetTheoremPart3} of Theorem \ref{theorem:BigCosetTheorem}, there are $n$ cycles forming $G[z]^\alpha$.  In this case $A'(w,G[z]) = A(v)$.  Thus, by Theorem \ref{theorem:BigCosetTheorem2}, there is only one z-region of $S^\alpha$ with respect to $\left \{z_v^a:\ a\in \mathbb{Z}_2\times \mathbb{Z}_n \right \rbrace$.  Per Construction \ref{construction:OrientationReversingZGraph}, we conclude that the $\Gamma(z_v^{00},z_v^{01},\ldots,z_v^{0(n-1)})$ is a bouquet of $n$ loops.\end{example}

\begin{figure}[H]
\begin{center}
\includegraphics[scale=.4]{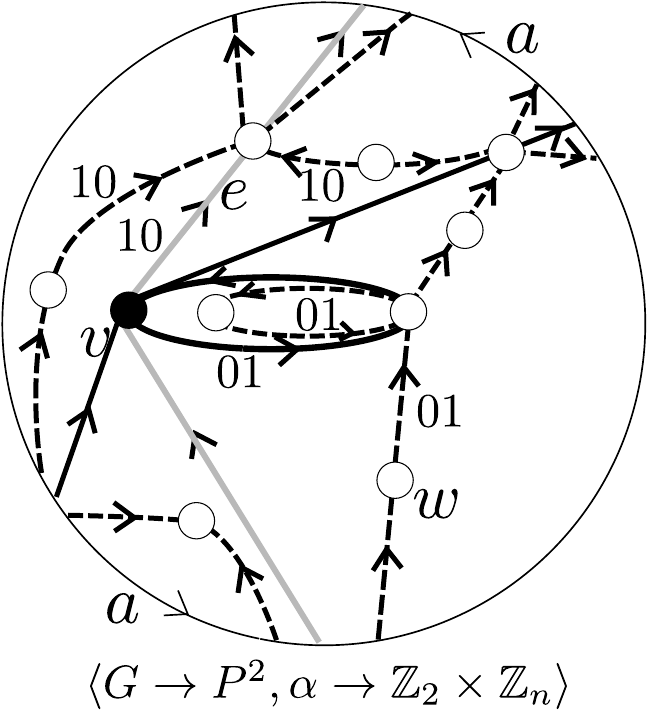}
\caption{An ordinary voltage graph embedding in the projective plane with the extended voltage assignment to the total graph.  We let $ab$ stand in the place of $(a,b)\in \mathbb{Z}_2\times\mathbb{Z}_n$.  Only the nonidentity voltages are indicated.  The vertices ond edges of $M'$ are white and dashed, respectively.  The edge $e$ of $G$ is colored grey.}\label{fig:P2Example2}
\end{center}
\end{figure}

Example \ref{example:TorusExample1} shows that for any positive integer $k$, there exists an ordinary voltage graph embedding such that the fiber over an orientation-preserving nonseparating cycle $G[z]$ bounds multiple $z$-regions with respect to the fiber over $z$, and that each of these $z$-regions is bounded by exactly $2k$ cycles of $G[z]^\alpha$.  For the remainder of this article, we let $T$ denote the torus.

\begin{example}\label{example:TorusExample1} Let integers $n$, $k$, and $d$ satisfy $n=kd$ and $n>d>1$.  Consider Figure \ref{fig:TorusExample1}, and let $z\in Z(G)$ be the loop $e$.  Let $S$ stand in the place of $T$.  Since there is a loop in $G$ with voltage $1$, we see that $A(v)=\mathbb{Z}_n$.  By Part \ref{theorem:BigCosetTheoremPart1} of Theorem \ref{theorem:BigCosetTheorem}, there is only one component of $S^\alpha$.  Since $A(v,G[z])\ = \left \{0\right \rbrace$, Part \ref{theorem:BigCosetTheoremPart3} of Theorem \ref{theorem:BigCosetTheorem} implies that there are $n$ cycles forming $G[z]^\alpha$.  Note that $A'(w,G[z])\cong \mathbb{Z}_{k}$, and so, by Theorem \ref{theorem:BigCosetTheorem2}, there are exactly $d$ z-regions of $S^\alpha$ with respect to $\left \{z_v^c:\ c\in \mathbb{Z}_n\right \rbrace$.  We conclude that each z-region is bounded by $2k$ cycles in the fiber over $G[z]$.  Per Construction \ref{construction:OrientationPreservingZGraph}, we conclude that $\Gamma(z_v^0,\ldots,z_v^{n-1})$ is a connected $2k$-regular graph consisting of $d$ vertices, and the edge $z_v^c$ is incident to the vertex $bA^\veebar(w,G[z])$ if and only if the left coset $\left \{c,\ c\omega,c\omega^2,\ldots,\ c\omega^{|\omega|-1}\right \rbrace$ of $\langle\omega\rangle$ in $A(v)$ is contained in the set $c\cdot A^\veebar(w,y,G[z])$.\end{example}

\begin{figure}[H]
\begin{center}
\includegraphics[scale=.4]{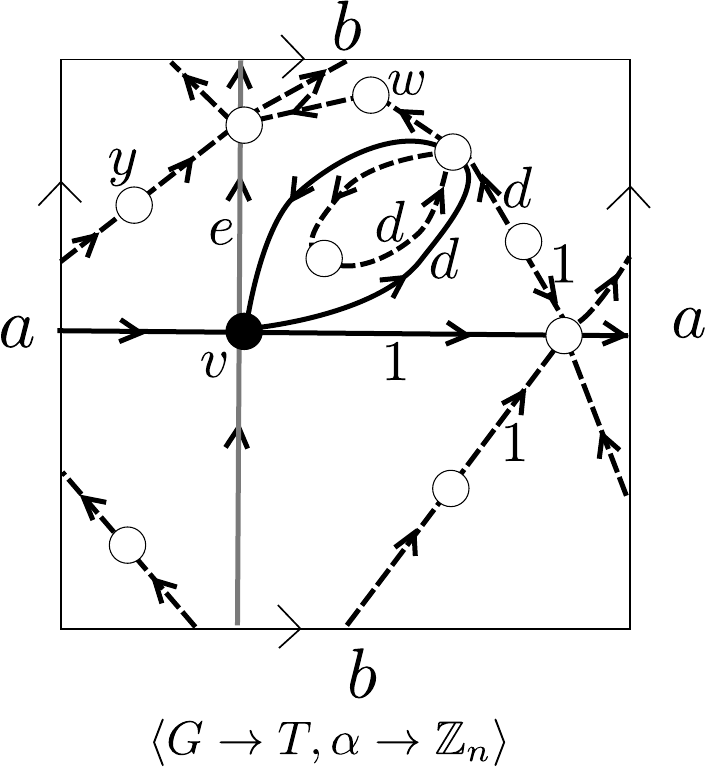}
\caption{An ordinary voltage graph embedding in the torus with the extended voltage assignment to the total graph.  Only the nonidentity voltages are indicated.  The vertices ond edges of $M'$ are white and dashed, respectively.  The edge $e$ of $G$ is colored grey.}\label{fig:TorusExample1}
\end{center}
\end{figure}

Example \ref{example:TorusExample2} shows that for any positive integer $n$, we can construct an ordinary voltage graph embedding such that: \begin{itemize} \item{there is only one component of the derived surface, and} \item{an orientation-preserving nonseparating cycle $G[z]$ lifts to $n$ orientation-preserving nonseparating cycles, and} \item{there is only one z-region with respect to the fiber over $z$.} \end{itemize}

\begin{example}\label{example:TorusExample2} Let $n$ denote any positive integer.  Consider Figure \ref{fig:TorusExample2}, and let $z\in Z(G)$ be the loop $e$.  Let $T$ denote the torus, and let $S$ stand in the place of $T$.  Since there is a loop in $G$ with voltage $1$, we see that $A(v)=\mathbb{Z}_n$, and so by Part \ref{theorem:BigCosetTheoremPart1} of Theorem \ref{theorem:BigCosetTheorem}, there is only one component of $S^\alpha$.  Since $A(v,G[z])=\left \{0\right \rbrace$, Part \ref{theorem:BigCosetTheoremPart3} implies that there are $n$ cycles forming $G[z]^\alpha$.  Note that $A'(w,G[z]) \cong \mathbb{Z}_n$, and so, by Theorem \ref{theorem:BigCosetTheorem2}, there is only one z-region of $S^\alpha$ with respect to $\left \{z_v^a:\ a\in \mathbb{Z}_n\right \rbrace$.  Per Construction \ref{construction:OrientationPreservingZGraph}, we conclude that $\Gamma(z_v^0,\ldots,z_v^{n-1})$ is a bouquet of $n$ loops.\end{example}

\begin{figure}[H]
\begin{center}
\includegraphics[scale=.4]{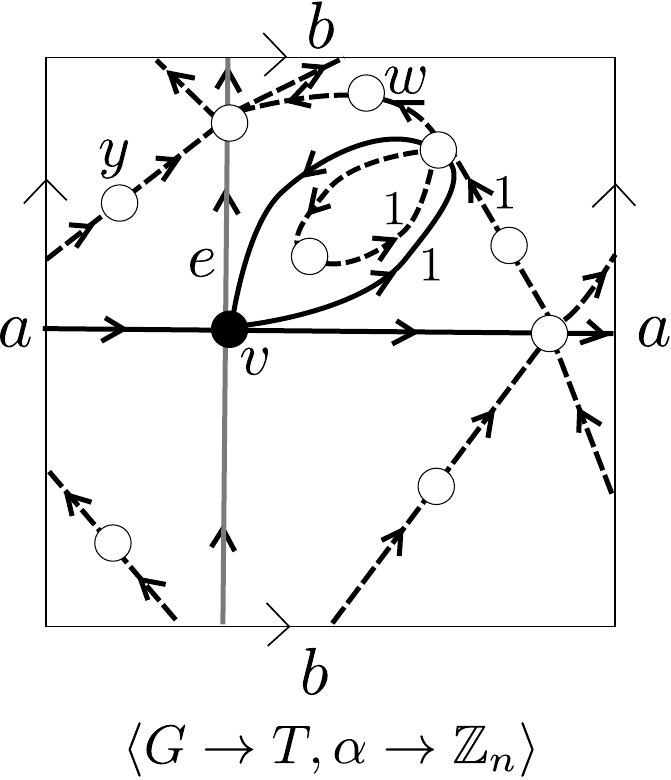}
\caption{An ordinary voltage graph embedding in the torus $T$ with the extended voltage assignment to the total graph added.  Only the nonidentity voltages are indicated.  The vertices ond edges of $M'$ are white and dashed, respectively.  The edge $e$ of $G$ is colored grey.}\label{fig:TorusExample2}
\end{center}
\end{figure}

\begin{remark}\label{remark:ConnectedComponents} Given $\langle G\rightarrow S, \alpha \rightarrow A\rangle$ and a vertex $v$ of $G$, it is possible to modify the voltage group $A$ in such a way that the derived surface has more components.  Consider the group $A\times \mathbb{Z}_n$ and the voltage assignment $\alpha_2\rightarrow A\times \mathbb{Z}_n$ satisfying $\alpha_2(e)=(\alpha(e),0)$.  If $\alpha$ satisfies $A(v)=A$, then Part \ref{theorem:BigCosetTheoremPart1} of Theorem \ref{theorem:BigCosetTheorem} implies that there is only one component of $S^\alpha$ and that there are $n$ components of $S^{\alpha_2}.$\end{remark}

\section{Acknowledgments}
The author wishes to thank his dissertation advisor, Lowell Abrams, for his wisdom and counsel.  Most of the content of this article appears in the author's dissertation.

\bibliographystyle{plain}
\bibliography{OrdinaryVoltageGraphsAndDerivedTopology.bib}

\end{document}